 \newtheorem{theorem}{Theorem}[section]
 \newtheorem{corollary}[theorem]{Corollary}
 \newtheorem{lemma}[theorem]{Lemma}
 \newtheorem{proposition}[theorem]{Proposition}
 \theoremstyle{definition}
 \theoremstyle{remark}
 \newtheorem{remark}[theorem]{Remark}
 \numberwithin{equation}{section}
\begin{document}

%
%
%
%
%
%
%
%
%

\title[Volterra type operators]
 {Volterra type operators between Bloch type spaces and weighted Banach spaces}

\author[Qingze Lin]{Qingze Lin}

\address{School of Applied Mathematics,
Guangdong University of Technology,
Guangzhou, Guangdong, 510520,
P.~R.~China}

\email{gdlqz@e.gzhu.edu.cn}

\subjclass[2010]{Primary 47G10; Secondary 30H05}

\keywords{Volterra type operator, boundedness, compactness, weighted Banach space, Bloch type space}


\begin{abstract}
When the weight $\mu$ is more general than normal, the complete characterizations in terms of the symbol $g$ and weights for the conditions of the boundedness and compactness of $T_g: H^{\infty}_\nu\rightarrow H^{\infty}_\mu$ and $S_g: H^{\infty}_\nu\rightarrow H^{\infty}_\mu$ are still unknown. Smith et al. firstly gave the sufficient and necessary conditions for the boundedness of Volterra type operators on Banach spaces of bounded analytic functions when the symbol functions are univalent. In this paper, continuing their lines of investigations, we give the complete characterizations of the conditions for the boundedness and compactness of Volterra type operators $T_g$ and $S_g$ between Bloch type spaces $\mathcal{B}^\infty_\nu$ and weighted Banach spaces $H^{\infty}_\nu$ with more general weights, which generalize their works.
\end{abstract}

\maketitle

\section{\bf Introduction}
Let $\mathbb{D}$ be the unit disk of the complex plane $\mathbb{C}$ and $H(\mathbb D)$ be the space consisting of all analytic functions on the unit disk. For any $g\in H(\mathbb D)$, the Volterra type operator $T_g$ and its companion operator $S_g$ are defined by
$$(T_gf)(z)=\int_0^z f(\omega)g'(\omega)d\omega\quad\text{and}\quad (S_gf)(z)=\int_0^z f'(\omega)g(\omega)d\omega,$$
respectively, where $z\in\mathbb D, f\in H(\mathbb D)\,.$

The Volterra type operator $T_g$ was firstly introduced by Pommerenke \cite{P} to study the exponentials of BMOA functions and in the meantime, proved that $T_g$ acting on Hardy-Hilbert space $H^2$ is bounded if and only if $g\in BMOA$\,. After his work, Aleman, Siskakis and Cima \cite{AC,AS} studied the boundedness and compactness of $T_g$ on Hardy space $H^p$, in which they showed that $T_g$ is bounded (compact) on $H^p,\ 0<p<\infty$, if and only if $g\in BMOA\ (g\in VMOA)$. For the related works, see \cite{LMN}. Furthermore, Aleman and Siskakis \cite{AS1} studied the boundedness and compactness of $T_g$ on Bergman spaces while Galanopoulos, Girela and Pel\'{a}ez \cite{GGP,GP} investigated the boundedness of $T_g$ and $S_g$ on Dirichlet type spaces and Xiao \cite{XJ} studied $T_g$ and $S_g$ on $Q_p$ spaces.

Recently, Lin et al. \cite{LIN} characterized the boundedness of $T_g$ and $S_g$ acting on the derivative Hardy spaces $S^p(1\leq p<\infty)$. Miihkinen \cite{SM} investigated the strict singularity of $T_g$ on Hardy space $H^p$ and showed that the strict singularity of $T_g$ coincides with its compactness on $H^p$. Mengestie \cite{TM} obtained a complete description of the boundedness and compactness of the product of Volterra type operators and composition operators on weighted Fock spaces. Furthermore, by applying the Carleson embedding theorem and the Littlewood-Paley formula, Constantin and Pel\'{a}ez \cite{CP} obtained the boundedness and compactness of $T_g$ on weighted Fock spaces and investigated the invariant subspaces of the classical Volterra operator $T_z$ on such spaces.

Let us recall that a {\it weight} $\nu$ is a non-negative continuous function defined on $\mathbb{D}$ such that $\nu(z)=\nu(|z|)$ and is decreasing.

The weighted \mbox{Banach} spaces $H^{\infty}_\nu$ and $H^{0}_\nu$ are defined, respectively, by
$$H^{\infty}_\nu=\{f\in H(\mathbb D):\ \|f\|_{H^{\infty}_\nu}:=\sup_{z\in \mathbb D}\{\nu(z)|f(z)|\}<\infty\}\,,$$
and
$$H^{0}_\nu=\{f\in H(\mathbb D):\ \lim_{|z|\rightarrow 1^-}\nu(z)|f(z)|=0\}\,.$$

The norms and essential norms of several bounded linear operators on such spaces had been extensively studied. For instance, Montes-Rodr\'{\i}guez \cite{AM} obtained the essential norms of weighted composition operators on $H^{\infty}_\nu$\,.

Closely related to the weighted \mbox{Banach} spaces $H^{\infty}_\nu$ are the Bloch type spaces $\mathcal{B}^{\infty}_\nu$ defined by
$$\mathcal{B}^{\infty}_\nu=\{f\in H(\mathbb D):\ \|f\|_{\mathcal{B}}:=|f(0)|+\sup_{z\in \mathbb D}\{\nu(z)|f'(z)|\}<\infty\}\,.$$
$\mathcal{B}:=\mathcal{B}^{\infty}_{(1-|z|^2)}$ is the classical Bloch space. Corresponding to the space $H^{0}_\nu$, the little Bloch type space $\mathcal{B}^{0}_\nu$ is defined by
$$\mathcal{B}^{0}_\nu=\{f\in H(\mathbb D):\ \lim_{|z|\rightarrow1^-}\nu(z)|f'(z)|=0\}\,.$$

A weight $\nu$ is said to be {\it typical} if $\lim_{|z|\rightarrow1^-}\nu(z)=0$\,. If $\nu$ is typical, then the bidual space of $H^{0}_\nu$ is $(H^{0}_\nu)^{**}=H^{\infty}_\nu$, what's more, the polynomials are dense in $H^{0}_\nu$ (see \cite{BBG}) and in the little Bloch type space $\mathcal{B}^{0}_\nu$\,.

A weight $\nu$ is called {\it analytic} if $\nu(z)=1/f(|z|)$ for some $f\in H(\mathbb{D})$ such that $|f(z)|\leq f(|z|)$ for all $z\in\mathbb{D}$\,.

Denote $\nu_{\alpha}(z)=(1-|z|^2)^\alpha\,,0\leq\alpha$. Following Shields and Williams \cite{SW,BCHMP}, we define that

(a) the weight $\nu$ satisfies {\it property (U)} if there exists a positive number $\alpha$ such that the function $r\mapsto \nu(r)/\nu_{\alpha}(r)$ is almost increasing, or equivalently, $\inf_{n}\frac{\nu(1-2^{-(n+1)})}{\nu(1-2^{-n})}>0$\,;

(b) the weight $\nu$ satisfies {\it property (L)} if there exists a positive number $\alpha$ such that the function $r\mapsto \nu(r)/\nu_{\alpha}(r)$ is almost decreasing, or equivalently,  there is a natural number $k$ such that $\limsup_{n}\frac{\nu(1-2^{-(n+k)})}{\nu(1-2^{-n})}<1$\,;

(c) the weight $\nu$ satisfies both properties (U) and (L) is called {\it normal}.

Obviously, the standard weights $\nu_{\alpha}=(1-|z|^2)^\alpha$ with $\alpha>0$ are normal and Hardy and Littlewood \cite{HL} proved that for all $\alpha>0$, $H^0_{\nu_{\alpha}}=\mathcal{B}^0_{\nu_{\alpha+1}}$ and by duality, $H^\infty_{\nu_{\alpha}}=\mathcal{B}^\infty_{\nu_{\alpha+1}}$\,. This result was extended by Lusky \cite{LUSKY} who proved that If the weight $\nu$ satisfies property (U), then $\nu$ satisfies property (L) if and only if $H^0_{\nu(z)}=\mathcal{B}^0_{\nu(z)(1-|z|^2)}$\,. In particular, if $\nu$ is normal, then $H^0_{\nu(z)}=\mathcal{B}^0_{\nu(z)(1-|z|^2)}$ and by duality, $H^\infty_{\nu(z)}=\mathcal{B}^\infty_{\nu(z)(1-|z|^2)}$\,.

Following Basallote et al. \cite{BCHMP}, we say that a weight $\nu$ is {\it quasi-normal} if $H^0_{\nu(z)}=\mathcal{B}^0_{\nu(z)(1-|z|^2)}$\,. Note that the quasi-normal weights $\nu$ are all typical and thus by duality, $H^\infty_{\nu(z)}=\mathcal{B}^\infty_{\nu(z)(1-|z|^2)}$ as well. To simplify the notation, we denote $\omega(z):=(1-|z|^2)\mu(z)$ throughout. Accordingly, Lusky's result \cite{LUSKY} can be restated: If the weight $\nu$ satisfies property (U), then $\nu$ satisfies property (L) if and only if $\nu$ is quasi-normal. It should be noticed that Basallote et al. \cite{BCHMP} characterized the boundedness, compactness and weak compactness of $T_g$ acting between different weighted Banach spaces of analytic functions with the quasi-normal weight, and what's more, they applied the characterization of compactness to analyze the behavior of semigroups of composition operators on such spaces.

To state the main results of this paper, we need the definitions of the so-called associated weight (see \cite{BBT}). For a given weight $\nu$, its associated weight $\widetilde{\nu}$ is defined by
$$\widetilde{\nu}:=\frac{1}{\sup_{z\in\mathbb{D}}\{|f(z)|:\ f\in H^\infty_\nu\,,\ \|f\|_{H^\infty_\nu}\leq1\}}\equiv\frac{1}{\|\delta_z\|_{H^\infty_\nu\rightarrow\mathbb{C}}}\,,$$
where $\delta_z:\ H^\infty_\nu\rightarrow\mathbb{C}$ is the evaluation linear functional. It is known that the following relations between $\nu$ and its associated weight $\widetilde{\nu}$ hold:

(1) $\widetilde{\nu}$ is also a weight and $\nu\leq\widetilde{\nu}$, what's more, $\|\cdot\|_{H^\infty_\nu}=\|\cdot\|_{H^\infty_{\widetilde{\nu}}}$\,;

(2) for any $a\in\mathbb{D}$, there exists an element $f_a$ in the closed unit ball of $H^\infty_\nu$ such that $|f_a(a)|=1/\widetilde{\nu}(a)$\,;

(3) if $\nu$ is typical, then so is $\widetilde{\nu}$, and in this case, it holds that
$$\widetilde{\nu}=\frac{1}{\sup_{z\in\mathbb{D}}\{|f(z)|:\ f\in H^0_\nu\,,\ \|f\|_{H^\infty_\nu}\leq1\}}\equiv\frac{1}{\|\delta_z\|_{H^0_\nu\rightarrow\mathbb{C}}}\,.$$

A weight $\nu$ is called {\it essential} if there is a constant $C\geq1$ such that
$$\nu(z)\leq\widetilde{\nu}\leq C\nu\quad \textrm{for all }z\in\mathbb{D}\,.$$
When a weight $\nu$ is essential, then in any formulas in this paper, $\widetilde{\nu}$ can be replaced by $\nu$ without changing the results.
It is well known that the weights $\nu$ satisfying property (U) are always essential. It is also known that if $\nu$ is analytic, then $\nu=\widetilde{\nu}$ and, in particular, $\nu$ is essential as well. For example, the analytic functions $f(z)=\left(1+\log\frac{1}{(1-z^2)}\right)^\alpha$, with $\alpha>0$, produce the essential weights $\nu_{\log,\alpha}=\left(1+\log\frac{1}{(1-|z|^2)}\right)^{-\alpha}$ which are analytic and in the meantime, satisfy property (U).

So far, when the weight $\mu$ is more general than normal, the complete characterizations in terms of the symbol $g$ and the weights for the conditions of the boundedness and compactness of $T_g: H^{\infty}_\nu\rightarrow H^{\infty}_\mu$ and $S_g: H^{\infty}_\nu\rightarrow H^{\infty}_\mu$ are still unknown. Although the necessary and sufficient condition for $T_g: H^{\infty}\rightarrow H^{\infty}$ to be bounded are characterized recently by \mbox{Contreras}, \mbox{Pel\'{a}ez}, \mbox{Pommerenke} and R\"{a}tty\"{a} \cite{CPPR}, the condition characterizing the symbol $g$ uses the Cauchy transforms which seems to be difficult to verified. However, Anderson, et al. \cite{AJS} conjectured that $T_g: H^{\infty}\rightarrow H^{\infty}$ is bounded if and only if
$$\sup_{0\leq\theta<2\pi}\int_0^1|g'(re^{i\theta})|dr<+\infty\,.$$

This condition is sufficient for the boundedness of $T_g: H^{\infty}\rightarrow H^{\infty}$, however, it was recently proved to be not necessary by Smith et al. in \cite{SSV}, where a counterexample was given. However, when the symbol $g$ is univalent, this conjecture was proved to be affirmative in that paper. Following their works, Eklund et. al. \cite{ELPSW} studied the boundedness and compactness of $T_g$ between $H_{\nu_{\alpha}}^\infty$ and $H^{\infty}$, where $g$ is univalent and $\nu_{\alpha}=(1-|z|^2)^\alpha$ with $0\leq\alpha<1$.

In this paper, using the ideas from \cite{ELPSW,SSV}, we firstly give the sufficient and necessary conditions for the boundedness and compactness of $T_g: H^{\infty}_\nu\rightarrow H^{\infty}_\mu$ when $\nu$ is analytic and $\log(g')\in \mathcal{B}$, and in  the meantime, characterize the boundedness and compactness of $S_g: H^{\infty}_\nu\rightarrow H^{\infty}_\mu$ when $\nu$ is analytic and $\log(g)\in \mathcal{B}$, where the weight $\mu$ is any weight and thus generalize their works. Then for some general weights, we characterize conditions for the boundedness and compactness of $T_g: H^{\infty}_\nu\rightarrow H^{\infty}_\mu$ and $S_g: H^{\infty}_\nu\rightarrow H^{\infty}_\mu$\,, and in this time, the key point in our study is the relationship between the growth of a function and the growth of its derivative, motivated by \cite{BCHMP}. Finally, the conditions for the boundedness and compactness of $S_g: H^{\infty}_\nu\rightarrow \mathcal{B}^{\infty}_\mu$, $S_g: \mathcal{B}^{\infty}_\nu\rightarrow H^{\infty}_\mu$ and $S_g: \mathcal{B}^{\infty}_\nu\rightarrow \mathcal{B}^{\infty}_\mu$ are also investigated.

\section{\bf The boundedness of Volterra type operators}
For any $0<\eta<\pi, 0<r\leq1$, and $0\leq \theta<2\pi$, denote by $\mathcal{B}(\Omega^{r}_{\eta,\theta})$ the set of all analytic functions $F$ defined in the open sector
$$\Omega^{r}_{\eta,\theta}:=\left\{z\in\mathbb{D}: 0<|z|<r,\ \theta-\frac{\eta}{2}<\arg{z}<\theta+\frac{\eta}{2}\right\}\,,$$
such that
$$|F'(z)|\leq\frac{C_F}{|z|},\quad\text{for } z\in \Omega^{r}_{\eta,\theta}\,.$$
Here, $C_F$ is a constant depending only on function $F$.

Using the similar arguments from \cite{SSV}, we have the following two lemmas.
\begin{lemma}[Theorem~1.2 of \cite{SSV}]\label{le1}
Let $0<\gamma<\eta<\pi$, $0\leq \theta<2\pi$ and $\varepsilon>0$. Then there exists a $\delta(\varepsilon)>0$ such that for any $F\in \mathcal{B}(\Omega^{1/2}_{\gamma,\theta})$, there is a real harmonic function $u$ defined in $\Omega^{1}_{\eta,\theta}$ such that:

(1) $\left|\textup{Re}(F(xe^{i\theta}))-u(xe^{i\theta})\right|\leq\varepsilon$,\quad for $x\in (0,\delta(\varepsilon)]$;

(2) $|\widetilde{u}(z)|\leq C(\varepsilon,\gamma,\eta,C_F)<\infty$,\quad for $z\in\Omega^{1}_{\eta,\theta}$, where $\widetilde{u}$ is the harmonic conjugate of $u$ such that $\widetilde{u}\left(\frac{1}{2}e^{i\theta}\right)=0$.
\end{lemma}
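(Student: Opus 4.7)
I would approach this by reducing to a half-strip model via the exponential change of variables $z = e^w$. Assuming $\theta = 0$ by a rotation, the sectors $\Omega^{1/2}_{\gamma,0}$ and $\Omega^1_{\eta,0}$ correspond to the half-strips
$$\Sigma_\gamma^{1/2} = \{w : \textup{Re}(w) < -\log 2,\ |\textup{Im}(w)| < \gamma/2\}, \qquad \Sigma_\eta = \{w : \textup{Re}(w) < 0,\ |\textup{Im}(w)| < \eta/2\}.$$
Setting $G(w) := F(e^w)$, the chain rule together with $|F'(z)| \leq C_F/|z|$ gives $|G'(w)| \leq C_F$ on $\Sigma_\gamma^{1/2}$, so $G$ is Lipschitz there. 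The ray $(0, \delta(\varepsilon)]$ corresponds to $(-\infty, \log \delta(\varepsilon)]$. The lemma then reduces to producing a real harmonic function $v$ on $\Sigma_\eta$ whose harmonic conjugate $\tilde v$ is bounded on $\Sigma_\eta$ and satisfies $|v(t) - \textup{Re}(G(t))| \leq \varepsilon$ for $t \leq \log \delta(\varepsilon)$; setting $u(z) := v(\log z)$ and normalising the conjugate so that $\tilde u(\tfrac{1}{2}) = 0$ yields the required $u$.

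For the construction, I would choose $t_0 := \log \delta(\varepsilon)$ very negative and split $\textup{Re}(G(t)) = \alpha\, t + \beta(t)$ on $(-\infty, t_0]$, where $\alpha := \textup{Re}(G'(t_0))$ plays the role of an averaged slope and is bounded by $C_F$. The linear piece is handled globally by the explicit analytic function $V_0(w) := \alpha w$ on $\Sigma_\eta$, whose imaginary part $\alpha\,\textup{Im}(w)$ is bounded by $C_F\,\eta/2$. It then remains to find a harmonic $h$ on $\Sigma_\eta$ with bounded conjugate such that $h(t) \approx \beta(t)$ on $(-\infty, t_0]$; note $|\beta'| \leq 2C_F$, although $\beta$ itself may grow. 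I would construct such $h$ as a Poisson-type extension, using a smoothly truncated copy of $\beta$ as boundary datum along the bottom ray $\textup{Im}(w) = 0,\ \textup{Re}(w) \leq t_0$, patched to bounded data on the remaining three sides $\textup{Im}(w) = \pm\eta/2$ and $\textup{Re}(w) = 0$.

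The main obstacle will be ensuring simultaneously that $|h - \beta| \leq \varepsilon$ along the ray and that $|\tilde h|$ is uniformly bounded on all of $\Sigma_\eta$; these requirements pull in opposite directions, since matching $\beta$ precisely on the boundary forces large boundary data whose conjugate via the Hilbert-type transform on the strip can blow up, while boundedness of the conjugate restricts $h$ to a rigid class (roughly linear-plus-bounded, in the spirit of a Phragm\'en--Lindel\"of analysis on the half-strip). I would resolve this tension by exploiting the freedom to take $\delta(\varepsilon)$ as small as desired: decompose the tail $(-\infty, t_0]$ dyadically, approximate $\beta$ piecewise by affine functions (admissible because $|\beta'| \leq 2C_F$), and assemble $h$ as a sum of elementary harmonic building blocks whose conjugates are individually bounded in terms of the Hilbert-transform norms associated with the half-strip geometry (controlled by the gap $\eta - \gamma$). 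Counting the dyadic scales required to achieve accuracy $\varepsilon$ produces the final bound $|\tilde u(z)| \leq C(\varepsilon, \gamma, \eta, C_F)$.
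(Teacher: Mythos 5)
The paper does not actually prove this lemma: it is quoted from Smith--Stolyarov--Volberg \cite{SSV} (their Theorem~1.2), with the remark ``using the similar arguments from \cite{SSV}''. So your attempt has to be measured against the argument in \cite{SSV}. Your opening reduction is exactly their first move and is correct: under $z=e^{w}$ the sector condition $|F'(z)|\leq C_F/|z|$ becomes the Lipschitz bound $|G'(w)|\leq C_F$ on a half-strip, and the radius $(0,\delta(\varepsilon)]$ becomes the ray $\{\textup{Im}\,w=0,\ \textup{Re}\,w\leq \log\delta(\varepsilon)\}$. You have also correctly identified the central tension (matching a Lipschitz function on the ray versus keeping the conjugate uniformly bounded). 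But the proposal stops where the theorem begins, and the sketched resolution has two concrete defects. First, a geometric one: the ray $\textup{Im}\,w=0$ is an \emph{interior} axis of the half-strip $\{|\textup{Im}\,w|<\eta/2,\ \textup{Re}\,w<0\}$, not a boundary side --- the half-strip has exactly three boundary sides, and you cannot impose $\beta$ there as Dirichlet data for a function harmonic on the whole strip (harmonic functions cannot match arbitrary data on an interior curve). Any Poisson-type construction must prescribe data on the genuine boundary and then solve the inverse problem of controlling the values induced on the interior axis, which your sketch does not address; \cite{SSV} avoid boundary-value problems altogether and build the approximant explicitly.

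Second, and decisively, the counting argument at the end is quantitatively wrong. Shrinking $\delta(\varepsilon)$ only translates $t_0=\log\delta(\varepsilon)$; the approximation is still required on the entire infinite ray $(-\infty,t_0]$, so any dyadic decomposition involves \emph{infinitely} many scales. If each building block merely has an ``individually bounded'' conjugate, the triangle inequality gives a bound proportional to the number of blocks, i.e.\ no bound at all; ``counting the dyadic scales required to achieve accuracy $\varepsilon$'' does not terminate. The uniform bound $|\widetilde{u}|\leq C(\varepsilon,\gamma,\eta,C_F)$ must come from cancellation engineered into the construction, and this is precisely the technical core of \cite{SSV}: a stopping-time places nodes where $\textup{Re}\,G$ has moved by a fixed amount, the approximant is the resulting piecewise-linear function with slopes of constant modulus and alternating sign structure, and boundedness of its conjugate \emph{uniformly in the number of nodes} is proved by exploiting this alternation, so that the contributions telescope rather than accumulate. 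Relatedly, your preliminary split $\textup{Re}\,G(t)=\alpha t+\beta(t)$ with $\alpha=\textup{Re}\,G'(t_0)$ buys essentially nothing: $\beta$ is again a general Lipschitz function with constant comparable to $C_F$ (radial restrictions of Bloch functions oscillate like unbounded random walks, so no single slope captures their behavior), and approximating $\beta$ is the full problem you started with. In short: correct reduction and correct diagnosis of the difficulty, but the mechanism that actually resolves it --- the fixed-slope stopping-time zigzag with its cancellation estimate --- is missing, and the assembly proposed in its place would fail.
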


\begin{lemma}[Page~189 of \cite{SSV}]\label{le2}
Let $0<\gamma<\eta<\pi$, $0\leq \theta<2\pi$. If $\psi_{\eta,\theta}:\Omega^{1}_{\eta,\theta}\rightarrow\mathbb{D}$ is a conformal map such that $\psi_{\eta,\theta}\left(\frac{1}{2}e^{i\theta}\right)=0$ and $\psi_{\eta,\theta}(0)=e^{i\theta}$,
then there exists a constant $C_1(\gamma,\eta)$ such that
$$\frac{|\psi'_{\eta,\theta}(z)|}{1-|\psi_{\eta,\theta}(z)|^2}\leq\frac{C_1(\gamma,\eta)}{|z|}$$
for all $z\in \Omega^{1/2}_{\gamma,\theta}$\,.
\end{lemma}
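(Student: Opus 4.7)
The quantity $|\psi_{\eta,\theta}'(z)|/(1-|\psi_{\eta,\theta}(z)|^2)$ is, up to the usual normalization, the hyperbolic density of $\Omega^1_{\eta,\theta}$ at $z$ transported via the conformal map, and the classical upper bound for this density is the reciprocal of the Euclidean distance from $z$ to $\partial\Omega^1_{\eta,\theta}$, produced by Schwarz's lemma. My plan is therefore to proceed in two independent steps: first, reduce the left-hand side to $1/\mathrm{dist}(z,\partial\Omega^1_{\eta,\theta})$; and second, carry out an elementary geometric estimate of this boundary distance for $z$ in the smaller sector $\Omega^{1/2}_{\gamma,\theta}$.

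Since $\Omega^1_{\eta,\theta}$ is the rotation of $\Omega^1_{\eta,0}$ by $e^{i\theta}$, I will first reduce to $\theta=0$ via the substitution $\psi_{\eta,\theta}(z)=\psi_{\eta,0}(e^{-i\theta}z)$. Setting $\phi:=\psi_{\eta,0}^{-1}:\mathbb{D}\to\Omega^1_{\eta,0}$, for $\zeta_0\in\mathbb{D}$ the open disk $D(\phi(\zeta_0),d)$ with $d:=\mathrm{dist}(\phi(\zeta_0),\partial\Omega^1_{\eta,0})$ is contained in $\Omega^1_{\eta,0}$, so
$$f(w):=M_{\zeta_0}^{-1}\bigl(\phi^{-1}(\phi(\zeta_0)+dw)\bigr),\qquad w\in\mathbb{D},$$
is a well-defined self-map of $\mathbb{D}$ with $f(0)=0$ (here $M_{\zeta_0}$ is the disk automorphism sending $0\mapsto\zeta_0$). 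Schwarz's lemma will then give $|f'(0)|\leq1$, and a direct computation of $f'(0)$ together with $\phi'(\zeta_0)=1/\psi_{\eta,0}'(\phi(\zeta_0))$ will turn this into the pointwise inequality
$$\frac{|\psi_{\eta,0}'(z)|}{1-|\psi_{\eta,0}(z)|^2}\leq\frac{1}{\mathrm{dist}(z,\partial\Omega^1_{\eta,0})}$$
for every $z\in\Omega^1_{\eta,0}$.

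The second step is the geometric estimate $\mathrm{dist}(z,\partial\Omega^1_{\eta,0})\geq|z|\sin((\eta-\gamma)/2)$ for $z\in\Omega^{1/2}_{\gamma,0}$. Writing $z=re^{i\alpha}$ with $0<r<1/2$ and $|\alpha|<\gamma/2$, the boundary of $\Omega^1_{\eta,0}$ is the union of the vertex $0$, the two radial segments at arguments $\pm\eta/2$, and the outer arc, and I will simply minimize the distance from $z$ to each piece. The distance to the vertex is $r$; the distance to the arc is $1-r>1/2$; and the perpendicular foot from $z$ to the ray through $0$ at angle $\eta/2$ lands at parameter $r\cos(\eta/2-\alpha)\in(0,r)\subset(0,1)$, hence genuinely inside the upper radial segment, so the distance to that segment equals $r\sin(\eta/2-\alpha)\geq r\sin((\eta-\gamma)/2)$; the lower segment is symmetric. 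The minimum of these four quantities will be $r\sin((\eta-\gamma)/2)$, and combining with the Schwarz inequality above yields the lemma with $C_1(\gamma,\eta)=\csc((\eta-\gamma)/2)$. The only mildly delicate point, and the reason why the statement is restricted to the smaller sector of radius $1/2$, is ensuring that the perpendicular foot actually lies inside the radial segment rather than past its outer endpoint; the bound $r<1/2$ takes care of this automatically, so I expect no serious obstacle.
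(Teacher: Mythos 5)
Your argument is correct, and a direct comparison with ``the paper's own proof'' is not really possible: the paper proves nothing here, importing the lemma verbatim from page~189 of \cite{SSV}. Your two-step route --- the Schwarz-lemma bound $|\psi'_{\eta,\theta}(z)|/(1-|\psi_{\eta,\theta}(z)|^2)\leq 1/\mathrm{dist}(z,\partial\Omega^{1}_{\eta,\theta})$ (monotonicity of the hyperbolic density under inclusion, which your explicit map $f$ with $f(0)=0$ and $|f'(0)|=d\,|\psi'_{\eta,0}(\phi(\zeta_0))|/(1-|\zeta_0|^2)\leq1$ implements correctly), followed by an elementary lower bound for the boundary distance in the sector --- is precisely the standard argument behind the cited statement, so methodologically you are on the intended track. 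Note also that, since the left-hand side is invariant under post-composition with rotations of $\mathbb{D}$, the boundary normalizations $\psi_{\eta,\theta}(0)=e^{i\theta}$, $\psi_{\eta,\theta}(\frac12 e^{i\theta})=0$ play no role, so your reduction to $\theta=0$ is harmless even though strictly it should read $\psi_{\eta,\theta}(z)=e^{i\theta}\psi_{\eta,0}(e^{-i\theta}z)$ to respect the stated normalization.

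One small slip needs repair, though it does not damage the conclusion. You claim the perpendicular foot from $z=re^{i\alpha}$ onto the ray at angle $\eta/2$ lands at parameter $r\cos(\eta/2-\alpha)\in(0,r)$; but the hypotheses allow $\eta/2-\alpha$ to exceed $\pi/2$ (take $\eta$ near $\pi$, $\gamma$ near $\eta$, and $\alpha$ near $-\gamma/2$), in which case $\cos(\eta/2-\alpha)<0$ and the nearest point of the radial segment is the vertex, at distance $r$. The clean fix avoids the foot-location discussion entirely: the segment is contained in the full line through $0$ at angle $\eta/2$, so $\mathrm{dist}(z,\mathrm{segment})\geq\mathrm{dist}(z,\mathrm{line})=r\sin(\eta/2-\alpha)$, and since $(\eta-\gamma)/2\leq\eta/2-\alpha\leq(\eta+\gamma)/2$ with $\sin\frac{\eta+\gamma}{2}\geq\sin\frac{\eta-\gamma}{2}$ (using $\eta<\pi$), one gets $\mathrm{dist}(z,\mathrm{segment})\geq r\sin\frac{\eta-\gamma}{2}$ in all cases. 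Relatedly, the true role of the radius-$\frac12$ restriction is not the foot location (the foot parameter $r\cos(\eta/2-\alpha)<r<1$ could never overshoot the outer endpoint anyway) but the arc estimate: it guarantees $1-r>\frac12>r\sin\frac{\eta-\gamma}{2}$, whereas for $|z|\to1$ the distance to the outer arc tends to $0$ and the bound $C_1(\gamma,\eta)/|z|$ would fail. With these touches your proof is complete and yields the explicit constant $C_1(\gamma,\eta)=1/\sin\frac{\eta-\gamma}{2}$.
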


Now we give the complete characterizations of the conditions for the boundedness of Volterra type operators $T_g: H^{\infty}_\nu\rightarrow H^{\infty}_\mu$\, when $\log(g')\in \mathcal{B}$\,.
\begin{theorem}\label{th1}
If $g\in H(\mathbb{D})$ such that $\log(g')\in \mathcal{B}$, the weight $\nu$ is analytic and $\mu$ is an arbitrary weight, then $T_g: H^{\infty}_\nu\rightarrow H^{\infty}_\mu$ is bounded if and only if
$$\sup_{0\leq t<1}\ \sup_{0\leq\theta<2\pi}\mu(t)\int_0^t\frac{|g'(re^{i\theta})|}{\nu(r)}dr<+\infty\,.$$
\end{theorem}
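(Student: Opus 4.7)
The plan is as follows.

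\textit{Sufficiency.} Given $f \in H^\infty_\nu$, parametrize $T_g f(te^{i\theta}) = \int_0^t f(re^{i\theta}) g'(re^{i\theta}) e^{i\theta} dr$ along the radial ray and apply $|f(re^{i\theta})| \le \|f\|_{H^\infty_\nu}/\nu(r)$; multiplying by $\mu(t)$ and taking suprema yields $\|T_g f\|_{H^\infty_\mu} \le \|f\|_{H^\infty_\nu} \cdot S$, where $S$ denotes the claimed supremum, hence $\|T_g\| \le S$.

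\textit{Necessity.} For each $(t, \theta) \in [0,1) \times [0, 2\pi)$ one must exhibit a test function $f_{t,\theta} \in H^\infty_\nu$ of norm bounded by a universal constant $M$ such that $|T_g f_{t,\theta}(te^{i\theta})| \ge c \int_0^t |g'(re^{i\theta})|/\nu(r)\, dr$; then $\|T_g\| \ge \mu(t) |T_g f_{t,\theta}(te^{i\theta})|/\|f_{t,\theta}\|_{H^\infty_\nu}$ and the desired finite supremum follows. The test function will be a product $f_{t,\theta} = p_\theta \cdot q_{t,\theta}$ of two ingredients. The amplitude factor $p_\theta$ is provided by $\nu$ being analytic: writing $1/\nu(|z|) = f_\nu(|z|)$ with $f_\nu \in H(\mathbb{D})$ and $|f_\nu(z)| \le f_\nu(|z|)$, the function $p_\theta(z) := f_\nu(e^{-i\theta}z)$ lies in the unit ball of $H^\infty_\nu$ and equals $1/\nu(r)$ on the ray $\{re^{i\theta}\}$. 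The phase corrector $q_{t,\theta} \in H^\infty$ is built using the SSV scheme: set $F = \log g' \in \mathcal{B}$ and let $\psi = \psi_{\eta,\theta}$ be the conformal map of Lemma \ref{le2}. The Bloch bound $|F'(w)| \lesssim 1/(1-|w|^2)$ together with Lemma \ref{le2} gives $F \circ \psi \in \mathcal{B}(\Omega^{1/2}_{\gamma,\theta})$, and Lemma \ref{le1} then produces a harmonic function $u$ on $\Omega^1_{\eta,\theta}$ approximating $\operatorname{Re}(F \circ \psi)$ along the bisector near the cusp (within $\varepsilon$) with harmonic conjugate $\tilde u$ globally bounded by some $C = C(\varepsilon,\gamma,\eta,C_F)$. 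One then assembles $q_{t,\theta}$ by transporting $U = u + i\tilde u$ through $\psi^{-1}$ (morally, $q_{t,\theta} = \exp(U \circ \psi^{-1} - F)$ up to a suitable $H^\infty$ normalization); the bounds on $\tilde u$ and on $u - \operatorname{Re}(F \circ \psi)$ translate into $|\arg(q_{t,\theta}(re^{i\theta}) g'(re^{i\theta}))| \le C$ and $|q_{t,\theta}(re^{i\theta}) g'(re^{i\theta})| \asymp |g'(re^{i\theta})|$ on (the relevant portion of) the radial segment. Integrating $p_\theta q_{t,\theta} g'$ along the ray and choosing the SSV parameters so that $C < \pi/2$ gives the required lower bound.

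\textit{Main obstacle.} The technical heart is to realize $q_{t,\theta}$ as a genuine element of $H^\infty$ with norm independent of $(t,\theta)$, and to guarantee that its phase-correcting property covers the whole of the segment $[0, te^{i\theta}]$ rather than only the cusp-side portion naturally supplied by Lemma \ref{le1}; the conformal map $\psi_{\eta,\theta}$ interchanges the scales at $0$ and $e^{i\theta}$, so the local harmonic approximation on the bisector near the cusp transfers to phase control on the far end of the radial segment in $\mathbb{D}$, and one must judiciously rescale (possibly choosing $\psi$ and the SSV parameters $\varepsilon,\gamma,\eta,\delta$ depending on $t$) to reach the full interval $[0, t]$. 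This is precisely the transplantation device of Smith--Stolyarov--Volberg, extended by Eklund et al., which I would adapt to the present weighted setting.
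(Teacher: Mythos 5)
Your sufficiency argument and your overall architecture for necessity (an amplitude factor supplied by the analytic weight, times an SSV-type corrector transported through $\psi_{\eta,\theta}$, with the inner/outer splitting of the radial segment) are the same as the paper's. But there is a genuine gap in how you invoke Lemma~\ref{le1}: you apply it to $F=\log g'$, so $u$ approximates $\operatorname{Re}(F\circ\psi)=\log|g'\circ\psi|$ on the bisector. With your corrector $q=\exp(U\circ\psi^{-1}-F)$ one has $qg'=e^{U\circ\psi^{-1}}$, hence $\arg(qg')=\tilde u\circ\psi^{-1}$ and $|q|=e^{u\circ\psi^{-1}}/|g'|$, and two steps then fail. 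First, $q\in H^\infty$ with uniform norm is unproved: Lemma~\ref{le1} controls $u-\operatorname{Re}(F\circ\psi)$ only \emph{on the bisector}, so $|q|$ is uncontrolled off the radial segment; the ``suitable $H^\infty$ normalization'' you defer is precisely the missing content. Second, and fatally, the phase of $qg'$ is bounded only by $C(\varepsilon,\gamma,\eta,C_F)$, the global bound on $\tilde u$; Lemma~\ref{le1} gives no smallness of this constant, so ``choosing the SSV parameters so that $C<\pi/2$'' is not available --- only $\varepsilon$, the approximation error on the bisector, is at your disposal. Since $\tilde u\circ\psi^{-1}$ may oscillate through an interval of length greater than $\pi$ along the segment, no constant rotation restores positivity of the real part, and the integral $\int p_\theta q g'\,dr$ can cancel. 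The paper's proof avoids this by taking $F_n=-i\log(g'\circ\psi_{\eta,\theta_n})$, so that $\operatorname{Re}F_n=\arg(g'\circ\psi_{\eta,\theta_n})$, and setting $h_n=e^{-i(u_n+i\tilde u_n)}$: then the small, controllable quantity $\arg g'-u_n$ (within $\varepsilon=\pi/3<\pi/2$ on $[r_\eta,1)$) sits in the \emph{phase}, while the large-but-bounded $\tilde u_n$ sits in the \emph{modulus}, giving both $\|h_n\|_{H^\infty}\le e^{C_2(\gamma,\eta)}$ and $\operatorname{Re}\bigl(f_n(re^{i\theta_n})g'(re^{i\theta_n})\bigr)\ge\cos(\tfrac{\pi}{3})e^{-C_2(\gamma,\eta)}|g'(re^{i\theta_n})|/\nu(r)$. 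Your roles of modulus and phase are inverted, and the argument does not close as written.

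Separately, the ``main obstacle'' you flag --- extending phase control from the outer portion to all of $[0,t]$, possibly with $t$-dependent choices of $\psi$ and the parameters --- is a non-issue: the paper splits at a fixed $r_\eta$ and bounds the inner contribution by the constant $\mu(0)C_3(\eta)/\nu(r_\eta)$ with $C_3(\eta)=\sup_{|z|\le r_\eta}|g'(z)|$, uniformly in $(t,\theta)$. Since one only needs the lower bound up to an additive constant (the paper runs a contradiction along a sequence with $\mu(t_{0,n})\int_0^{t_{0,n}}|g'(re^{i\theta_n})|\nu(r)^{-1}dr\ge n$), your stronger pointwise goal $|T_gf_{t,\theta}(te^{i\theta})|\ge c\int_0^t|g'|/\nu\,dr$ is neither needed nor delivered by the construction; fixed parameters $\gamma,\eta,\varepsilon=\pi/3$ and fixed $r_\eta$ suffice. (Your amplitude factor $f_\nu(e^{-i\theta}z)$, which equals $1/\nu(r)$ exactly on the ray while staying in the unit ball of $H^\infty_\nu$, is correct.)
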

\begin{proof}
Assume first that there is a positive number $N$ such that
$$\sup_{0\leq t<1}\ \sup_{0\leq\theta<2\pi}\mu(t)\int_0^t\frac{|g'(re^{i\theta})|}{\nu(r)}dr\leq N\,.$$
For any $f\in H^{\infty}_\nu$, we have

\begin{equation}\begin{split}\nonumber
\|T_g(f)\|_{H^{\infty}_{\mu}}&=\sup_{z\in\mathbb{D}}\left\{\mu(z) \left|\int_0^zf(\zeta)g'(\zeta)d\zeta\right|\right\}\\
&=\sup_{0\leq R<1}\ \sup_{0\leq\theta<2\pi} \left\{\mu(R)\left|\int_0^{Re^{i\theta}}f(\zeta)g'(\zeta)d\zeta\right|\right\}\\
&=\sup_{0\leq R<1}\ \sup_{0\leq\theta<2\pi}\left\{\mu(R)\left|\int_0^{R}f(re^{i\theta})g'(re^{i\theta})e^{i\theta}dr\right|\right\}\\
&\leq \|f\|_{H^{\infty}_{\nu}}\cdot\sup_{0\leq R<1}\ \sup_{0\leq\theta<2\pi}\left\{\mu(R)\int_0^{R}\frac{|g'(re^{i\theta})|}{\nu(r)}dr\right\}\\
&\leq N\|f\|_{H^{\infty}_{\nu}}\,.
\end{split}\end{equation}
Accordingly, $T_g: H^{\infty}_\nu\rightarrow H^{\infty}_\mu$ is bounded.

Conversely, assume that
$$\sup_{0\leq t<1}\ \sup_{0\leq\theta<2\pi}\mu(t)\int_0^t\frac{|g'(re^{i\theta})|}{\nu(r)}dr=+\infty\,,$$
which is equivalent to
$$\limsup_{t\rightarrow1^-}\sup_{0\leq\theta<2\pi}\mu(t)\int_0^t\frac{|g'(re^{i\theta})|}{\nu(r)}dr=+\infty\,.$$
Then choose a nondecreasing positive sequence $\{t_{0,n}\}_{n=1}^\infty$, with $t_{0,n}\rightarrow1^-$, such that for each $t_{0,n}$, there exits an angle $\theta_n$, with $0\leq\theta_n<2\pi$, such that
$$\mu(t_{0,n})\int_0^{t_{0,n}}\frac{|g'(re^{i\theta_n})|}{\nu(r)}dr\geq n\,.$$

Define the function $F_n:\Omega^{1}_{\eta,\theta_n}\rightarrow\mathbb{D}$ by
$$F_n(z)=-i\log\left(g'\circ\psi_{\eta,\theta_n}(z)\right)\,.$$
Since $\log(g')\in\mathcal{B}$, there is a constant $K>0$ such that
$$|g''(z)|\leq \frac{K|g'(z)|}{(1-|z|^2)}\quad \text{for all } z\in \mathbb{D}\,.$$
By Lemma~\ref{le2}, we have
\begin{equation}\begin{split}\nonumber
|F'_n(z)|&=\frac{\left|g''(\psi_{\eta,\theta_n}(z))\right|}{\left|g'(\psi_{\eta,\theta_n}(z))\right|}\cdot\left|\psi'_{\eta,\theta_n}(z)\right|\leq\frac{K\left|\psi'_{\eta,\theta_n}(z)\right|}{1-\left|\psi_{\eta,\theta_n}(z)\right|^2}\leq\frac{K\cdot C_1(\gamma,\eta)}{|z|}\,,
\end{split}\end{equation}
for all $z\in \Omega^{1/2}_{\gamma,\theta_n}$\,.
Thus, the restriction of $F_n$ to $\Omega^{1/2}_{\gamma,\theta_n}$ belongs to $\mathcal{B}(\Omega^{1/2}_{\gamma,\theta_n})$\,.

By Lemma~\ref{le1} with $\varepsilon=\frac{\pi}{3}$, we have a positive number $\delta(\frac{\pi}{3})$ such that for any $F_n\in \mathcal{B}(\Omega^{1/2}_{\gamma,\theta_n})$, there exits a real harmonic function $u_n$ defined in $\Omega^{1}_{\eta,\theta_n}$ such that:

(1) $\left|\text{Re}(F_n(xe^{i\theta_n}))-u_n(xe^{i\theta_n})\right|\leq\frac{\pi}{3}$,\quad for $x\in (0,\delta(\frac{\pi}{3})]$;

(2) $|\widetilde{u}_n(z)|\leq C(\frac{\pi}{3},\gamma,\eta,K\cdot C_1(\gamma,\eta))=:C_2(\gamma,\eta)<\infty$,\quad for $z\in\Omega^{1}_{\eta,\theta_n}$, where $\widetilde{u}_n$ is the harmonic conjugate of $u_n$ such that $\widetilde{u}_n\left(\frac{1}{2}e^{i\theta_n}\right)=0$.

From (1) and the definition of conformal function $\psi_{\eta,\theta_n}$, we obtain that there is a real $r_\eta$ with $0<r_\eta<1$ such that

$$\left|\arg{g'(re^{i\theta_n})}-u_n(\psi^{-1}_{\eta,\theta_n}(re^{i\theta_n}))\right|\leq\frac{\pi}{3},\quad \text{for }r\in [r_\eta,1)\,.$$

Define
$$h_n(z):=e^{-i(u_n(z)+i\widetilde{u}_n(z))}\,,$$
and consider the sequence $\{f_n\}_{n=1}^\infty$ of functions given by
$$f_n(z):={h_n\circ\psi^{-1}_{\eta,\theta_n}(z)}\cdot {f_\nu(e^{-2\theta_ni}z)}\,,$$
where $f_\nu$ is an analytic function such that $v(z)=1/f_\nu(|z|)$ for all $z\in\mathbb{D}$\,.
Then by (2) and the fact that the weight $\nu$ is analytic, it follows that the sequence $\{f_n\}_{n=1}^\infty$ is bounded in $H^{\infty}_{\nu}$ with
\begin{equation}\begin{split}\nonumber
\|f_n\|_{H^{\infty}_{\nu}}&=\sup_{z\in\mathbb{D}}\left\{\nu(z)\left|{h_n\circ\psi^{-1}_{\eta,\theta_n}(z)}\cdot{f_\nu(e^{-2\theta_ni}z)}\right|\right\}\\
&=\sup_{z\in\mathbb{D}}\left\{\nu(z)\left|{h_n\circ\psi^{-1}_{\eta,\theta_n}(z)}\right|\cdot{f_\nu(|z|)}\right\}\\
&\leq\left\|h_n\circ\psi^{-1}_{\eta,\theta_n}(z)\right\|_{H^{\infty}}\leq e^{C_2(\gamma,\eta)}.
\end{split}\end{equation}

Now choose $n$ large enough so that $r_\eta\leq t_{0,n}<1$, we see that
\begin{equation}\begin{split}\nonumber
\|T_g(f_n)\|_{H^{\infty}_{\mu}}&=\sup_{z\in\mathbb{D}}\left\{\mu(z)\left|\int_0^zf_n(\zeta)g'(\zeta)d\zeta\right|\right\}\\
&\geq\mu(t_{0,n})\left|\int_0^{t_{0,n}}f_n(re^{i\theta_n})g'(re^{i\theta_n})e^{i\theta_n}dr\right|\\
&\geq\left|\text{Re}\left(\mu(t_{0,n})\int_{r_\eta}^{t_{0,n}}f_n(re^{i\theta_n})g'(re^{i\theta_n})dr\right)\right|\\
&\phantom{\geq\ }-\left|\text{Re}\left(\mu(t_{0,n})\int_0^{r_\eta}f_n(re^{i\theta_n})g'(re^{i\theta_n})dr\right)\right|
\end{split}\end{equation}

First, we observe that
\begin{equation}\begin{split}\nonumber
&\left|\text{Re}\left(\mu(t_{0,n})\int_0^{r_\eta}f_n(re^{i\theta_n})g'(re^{i\theta_n})dr\right)\right|\leq\mu(0)\int_0^{r_\eta}|f_n(re^{i\theta_n})||g'(re^{i\theta_n})|dr\\
&\leq\mu(0)\|f_n\|_{H^{\infty}_{\nu}}\int_0^{r_\eta}\frac{|g'(re^{i\theta_n})|}{\nu(r)}dr\leq\mu(0)\frac{e^{C_2(\gamma,\eta)}C_3(\eta)}{\nu(r_\eta)}\,,
\end{split}\end{equation}
where $C_3(\eta):=\sup_{|z|\leq r_\eta}\{|g'(z)|\}$\,.

Then, for the estimation of the other integral, we see that

\begin{equation}\begin{split}\nonumber
&\left|\text{Re}\left(\mu(t_{0,n})\int_{r_\eta}^{t_{0,n}}f_n(re^{i\theta_n})g'(re^{i\theta_n})dr\right)\right|\\
&=\left|\mu(t_{0,n})\text{Re}\left(\int_{r_\eta}^{t_{0,n}}\frac{g'(re^{i\theta_n})}{\nu(r)}e^{-i\left(u_n(\psi^{-1}_{\eta,\theta_n}(re^{i\theta_n}))+i\widetilde{u}_n(\psi^{-1}_{\eta,\theta_n}(re^{i\theta_n}))\right)}dr\right)\right|\\
&=\Bigg|\mu(t_{0,n})\int_{r_\eta}^{t_{0,n}}\frac{|g'(re^{i\theta_n})|}{\nu(r)}e^{\widetilde{u}_n\left(\psi^{-1}_{\eta,\theta_n}(re^{i\theta_n})\right)}\\
&\phantom{=\ }\cdot\text{Re}\left(e^{i\left(\arg(g'(re^{i\theta_n}))-u_n(\psi^{-1}_{\eta,\theta_n}(re^{i\theta_n}))\right)}\right)dr\Bigg|\\
&\geq\cos(\frac{\pi}{3})e^{-C_2(\gamma,\eta)}\mu(t_{0,n})\int_{r_\eta}^{t_{0,n}}\frac{|g'(re^{i\theta_n})|}{\nu(r)}dr\\
&\geq\cos(\frac{\pi}{3})e^{-C_2(\gamma,\eta)}\mu(t_{0,n})\left(\int_{0}^{t_{0,n}}\frac{|g'(re^{i\theta_n})|}{\nu(r)}dr-\int_{0}^{r_\eta}\frac{|g'(re^{i\theta_n})|}{\nu(r)}dr\right)\\
&\geq\cos(\frac{\pi}{3})e^{-C_2(\gamma,\eta)}\left(\mu(t_{0,n})\int_{0}^{t_{0,n}}\frac{|g'(re^{i\theta_n})|}{\nu(r)}dr-\mu(0)\int_{0}^{r_\eta}\frac{|g'(re^{i\theta_n})|}{\nu(r)}dr\right)\\
&\geq\cos(\frac{\pi}{3})e^{-C_2(\gamma,\eta)}\left(n-\mu(0)\int_{0}^{r_\eta}\frac{|g'(re^{i\theta_n})|}{\nu(r)}dr\right)\\
&\geq\cos(\frac{\pi}{3})e^{-C_2(\gamma,\eta)}\left(n-\mu(0)\frac{C_3(\eta)}{\nu(r_\eta)}\right)\,.
\end{split}\end{equation}
Accordingly, for $n$ large enough such that $r_\eta\leq t_{0,n}<1$, we obtain that
$$\|T_g(f_n)\|_{H^{\infty}_{\mu}}\geq\cos(\frac{\pi}{3})e^{-C_2(\gamma,\eta)}n-\left(e^{2C_2(\gamma,\eta)}+\cos(\frac{\pi}{3})\right)\frac{\mu(0)e^{-C_2(\gamma,\eta)}C_3(\eta)}{\nu(r_\eta)}\,.$$
Letting $n\rightarrow\infty$, it follows that $T_g: H^{\infty}_\nu\rightarrow H^{\infty}_\mu$ is not bounded.
\end{proof}

\begin{remark}\label{re1}
From the proof of Theorem~\ref{th1}, we see that the condition $$\sup_{0\leq t<1}\ \sup_{0\leq\theta<2\pi}\mu(t)\int_0^t\frac{|g'(re^{i\theta})|}{\nu(r)}dr<+\infty\,,$$
is sufficient for the boundedness of $T_g: H^{\infty}_\nu\rightarrow H^{\infty}_\mu$ without the requirement that $\log(g')\in \mathcal{B}$ and the weight $\nu$ is analytic .
\end{remark}

\begin{remark}\label{re2}
If $g\in H(\mathbb{D})$ is univalent, then by \cite{PB}, it holds that $\log(g')\in \mathcal{B}$, thus Theorem~\ref{th1} holds for the univalent case.
\end{remark}

\begin{remark}\label{re3}
It should be noticed that Basallote et. al. \cite{BCHMP} had given the following complete characterization of the boundedness of $T_g: H^{\infty}_\nu\rightarrow H^{\infty}_\mu$ for the general symbol $g\in H(\mathbb{D})$ when $\mu$ is quasi-normal, namely, the following statements are equivalent:

(1) $T_g: H^{\infty}_\nu\rightarrow H^{\infty}_\mu$ is bounded.

(2) $\sup_{z\in\mathbb{D}}(1-|z|^2)|g'(z)|{\mu(z)}/{\widetilde{\nu}(z)}<\infty\,.$

If, in addition, $\nu$ is a typical weight, then both (1) and (2) are equivalent to

(3) $T_g: H^{0}_\nu\rightarrow H^{0}_\mu$ is bounded.
\end{remark}

The following corollary is the most interesting part of Theorem~\ref{th1}.
\begin{corollary}\label{cor1}
If $g\in H(\mathbb{D})$ such that $\log(g')\in \mathcal{B}$ and the weight $\nu$ is analytic, then $T_g: H^{\infty}_\nu\rightarrow H^{\infty}$ is bounded if and only if
$$\sup_{0\leq\theta<2\pi}\int_0^1\frac{|g'(re^{i\theta})|}{\nu(r)}dr<+\infty\,.$$
\end{corollary}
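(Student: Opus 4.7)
The plan is to obtain Corollary~\ref{cor1} as an immediate specialization of Theorem~\ref{th1} by taking the target weight to be the constant weight $\mu \equiv 1$. First I would verify that $\mu(z)\equiv 1$ satisfies the paper's definition of a weight: it is non-negative, continuous, radial, and (weakly) decreasing as a radial function, so it qualifies as an ``arbitrary weight'' in the sense of Theorem~\ref{th1}. Moreover, with this choice, $\|f\|_{H^\infty_\mu} = \sup_{z\in\mathbb{D}}|f(z)| = \|f\|_{H^\infty}$, so $H^\infty_\mu = H^\infty$ isometrically.

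Applying Theorem~\ref{th1} directly under the hypotheses $\log(g')\in\mathcal{B}$ and $\nu$ analytic, the boundedness of $T_g:H^\infty_\nu\to H^\infty$ is equivalent to
\[
\sup_{0\leq t<1}\ \sup_{0\leq\theta<2\pi}\int_0^t\frac{|g'(re^{i\theta})|}{\nu(r)}\,dr<+\infty.
\]
The remaining step is to simplify this double supremum. Since the integrand $|g'(re^{i\theta})|/\nu(r)$ is non-negative in $r$, for each fixed $\theta$ the map $t\mapsto\int_0^t |g'(re^{i\theta})|/\nu(r)\,dr$ is non-decreasing on $[0,1)$, so by monotone convergence
\[
\sup_{0\leq t<1}\int_0^t\frac{|g'(re^{i\theta})|}{\nu(r)}\,dr=\int_0^1\frac{|g'(re^{i\theta})|}{\nu(r)}\,dr.
\]
Interchanging the order of the two suprema (which is always permissible) then gives
\[
\sup_{0\leq t<1}\ \sup_{0\leq\theta<2\pi}\int_0^t\frac{|g'(re^{i\theta})|}{\nu(r)}\,dr=\sup_{0\leq\theta<2\pi}\int_0^1\frac{|g'(re^{i\theta})|}{\nu(r)}\,dr,
\]
which is the condition in the corollary.

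There is no substantive obstacle: the hypotheses $\log(g')\in\mathcal{B}$ and $\nu$ analytic are exactly those needed to invoke Theorem~\ref{th1}, and the reduction to a single integral over $[0,1]$ is a routine consequence of non-negativity of the integrand. The only point worth stating explicitly is the verification that $\mu\equiv1$ is an admissible weight, so that Theorem~\ref{th1} applies without any additional argument.
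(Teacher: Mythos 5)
Your proposal is correct and matches the paper's own proof: both specialize Theorem~\ref{th1} to the constant weight $\mu\equiv1$ (so that $H^\infty_\mu=H^\infty$) and then use the monotonicity of $t\mapsto\int_0^t|g'(re^{i\theta})|/\nu(r)\,dr$ to collapse the supremum over $t$ into the integral over $[0,1)$. The only cosmetic difference is that you interchange the two suprema directly (always valid), whereas the paper phrases the same step as exchanging a $\limsup$ in $t$ with the supremum over $\theta$, justified by the same monotonicity.
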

\begin{proof}
By Theorem~\ref{th1},  $T_g: H^{\infty}_\nu\rightarrow H^{\infty}$ is bounded if and only if
$$\limsup_{t\rightarrow1^{-}}\sup_{0\leq\theta<2\pi}\int_0^t\frac{|g'(re^{i\theta})|}{\nu(r)}dr<+\infty\,.$$
Since $\int_0^t\frac{|g'(re^{i\theta})|}{\nu(r)}dr$ is increasing as $t\rightarrow1^{-}$, it follows that
\begin{equation}\begin{split}\nonumber
\limsup_{t\rightarrow1^{-}}\sup_{0\leq\theta<2\pi}\int_0^t\frac{|g'(re^{i\theta})|}{\nu(r)}dr&=\sup_{0\leq\theta<2\pi}\limsup_{t\rightarrow1^{-}}\int_0^t\frac{|g'(re^{i\theta})|}{\nu(r)}dr\\
&=\sup_{0\leq\theta<2\pi}\int_0^1\frac{|g'(re^{i\theta})|}{\nu(r)}dr.
\end{split}\end{equation}
\end{proof}

Now we consider the boundedness of the companion operators $S_g: H^{\infty}_\nu\rightarrow H^{\infty}_\mu$. In this case, we suppose that the weight $\nu$ satisfies property (U). Then by \cite[Lemma~5]{WOLF}, there exists a constant $C_\nu>0$ such that for any $f\in H^\infty_{\nu}$\,,
$$|f^{(n)}(z)|\leq\frac{C_\nu\|f\|_{H^\infty_{\nu}}}{\nu(z)(1-|z|^2)^n}\,,$$
for any $z\in\mathbb{D}$ and every non-negative integer $n$\,. Thus, we first give a sufficient condition for the boundedness of $S_g: H^{\infty}_\nu\rightarrow H^{\infty}_\mu$ for general $g\in H(\mathbb{D})$\,.

\begin{proposition}\label{pro1}
If the weight $\nu$ satisfies property (U) while $\mu$ is an arbitrary weight, then $S_g: H^{\infty}_\nu\rightarrow H^{\infty}_\mu$ is bounded if
$$\sup_{0\leq t<1}\ \sup_{0\leq\theta<2\pi}\mu(t)\int_0^t\frac{|g(re^{i\theta})|}{(1-r^2)\nu(r)}dr<+\infty\,.$$
\end{proposition}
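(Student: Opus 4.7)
The plan is to imitate the sufficiency direction of Theorem~\ref{th1} (the easy half) and reduce everything to a straightforward pointwise estimate of $|S_g f(z)|$ on the radial segment from $0$ to $z$, once $|f'|$ has been controlled by $\|f\|_{H^\infty_\nu}$.

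First I would fix $f \in H^{\infty}_\nu$, write an arbitrary point of $\mathbb{D}$ as $z = R e^{i\theta}$ with $0\le R<1$ and $0\le\theta<2\pi$, and parametrize the integral defining $S_g f$ along the radial segment, obtaining
\[
|S_g f(Re^{i\theta})| = \left|\int_0^R f'(re^{i\theta})\,g(re^{i\theta})\,e^{i\theta}\,dr\right| \le \int_0^R |f'(re^{i\theta})|\,|g(re^{i\theta})|\,dr.
\]
The next step is to invoke the Wolf-type derivative estimate stated just above the proposition: since $\nu$ satisfies property (U), there exists $C_\nu>0$ such that
\[
|f'(re^{i\theta})| \le \frac{C_\nu\,\|f\|_{H^\infty_\nu}}{\nu(r)(1-r^2)}
\]
for every $r\in[0,1)$ and every $\theta$. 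Substituting this bound into the previous inequality gives
\[
|S_g f(Re^{i\theta})| \le C_\nu\,\|f\|_{H^\infty_\nu}\int_0^R \frac{|g(re^{i\theta})|}{\nu(r)(1-r^2)}\,dr.
\]

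Multiplying by $\mu(R)$ and taking the supremum over $R \in [0,1)$ and $\theta \in [0,2\pi)$ yields
\[
\|S_g f\|_{H^\infty_\mu} \le C_\nu\,\|f\|_{H^\infty_\nu}\,\sup_{0\le t<1}\,\sup_{0\le\theta<2\pi}\mu(t)\int_0^t\frac{|g(re^{i\theta})|}{(1-r^2)\nu(r)}\,dr,
\]
and the hypothesis ensures the factor on the right is finite, so $S_g\colon H^\infty_\nu \to H^\infty_\mu$ is bounded. There is no serious obstacle here: the argument is entirely parallel to the sufficiency part of Theorem~\ref{th1}, with the crucial input being that property (U) of $\nu$ upgrades the $H^\infty_\nu$-norm control on $f$ to a pointwise control on $f'$ with the expected loss of one factor of $(1-r^2)$, which is exactly what absorbs the derivative present in $S_g$ rather than $T_g$.
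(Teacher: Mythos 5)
Your proposal is correct and is essentially identical to the paper's own proof: the paper likewise parametrizes $S_g f$ along the radial segment, applies the derivative estimate $|f'(z)|\leq C_\nu\|f\|_{H^\infty_\nu}/\bigl(\nu(z)(1-|z|^2)\bigr)$ guaranteed by property (U) (the Wolf lemma cited just before the proposition), and takes suprema over $R$ and $\theta$ to conclude $\|S_g f\|_{H^\infty_\mu}\leq N C_\nu\|f\|_{H^\infty_\nu}$. No differences worth noting.
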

\begin{proof}
Assume that the condition is satisfied, then there exists $N>0$ such that

$$\sup_{0\leq t<1}\ \sup_{0\leq\theta<2\pi}\mu(t)\int_0^t\frac{|g(re^{i\theta})|}{(1-r^2)\nu(r)}dr\leq N\,.$$
For any $f\in H^{\infty}_\nu$, we have

\begin{equation}\begin{split}\nonumber
\|T_g(f)\|_{H^{\infty}_{\mu}}&=\sup_{z\in\mathbb{D}}\left\{\mu(z) \left|\int_0^zf'(\zeta)g(\zeta)d\zeta\right|\right\}\\
&=\sup_{0\leq R<1}\ \sup_{0\leq\theta<2\pi}\left\{\mu(R)\left|\int_0^{Re^{i\theta}}f'(\zeta)g(\zeta)d\zeta\right|\right\}\\
&=\sup_{0\leq R<1}\ \sup_{0\leq\theta<2\pi}\left\{\mu(R)\left|\int_0^{R}f'(re^{i\theta})e^{i\theta}g(re^{i\theta})dr\right|\right\}\\
&\leq C_{\nu}\|f\|_{H^{\infty}_{\nu}}\cdot\sup_{0\leq R<1}\ \sup_{0\leq\theta<2\pi}\left\{\mu(R)\int_0^{R}\frac{|g(re^{i\theta})|}{(1-r^2)\nu(r)}dr\right\}\\
&\leq NC_{\nu}\|f\|_{H^{\infty}_{\nu}}\,,
\end{split}\end{equation}
which implies that $S_g: H^{\infty}_\nu\rightarrow H^{\infty}_\mu$ is bounded.
\end{proof}

Now, we give the complete characterization for the boundedness of $S_g: H^{\infty}_\nu\rightarrow H^{\infty}_\mu$ when $\log(g)\in \mathcal{B}$ and the weight $\nu$ is analytic and quasi-normal.

\begin{theorem}\label{th2}
If $g\in H(\mathbb{D})$ such that $\log(g)\in \mathcal{B}$, the weight $\nu$ is analytic and quasi-normal and $\mu$ is an arbitrary weight, then $S_g: H^{\infty}_\nu\rightarrow H^{\infty}_\mu$ is bounded if and only if
$$\sup_{0\leq t<1}\ \sup_{0\leq\theta<2\pi}\mu(t)\int_0^t\frac{|g(re^{i\theta})|}{(1-r^2)\nu(r)}dr<+\infty\,.$$
\end{theorem}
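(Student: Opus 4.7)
My plan is to follow the template of Theorem~\ref{th1} closely, adapting it to the companion operator $S_g$; the sufficiency half is already Proposition~\ref{pro1}. Arguing by contradiction for the necessity, I would extract sequences $t_{0,n}\to 1^-$ and $\theta_n\in[0,2\pi)$ with
$$\mu(t_{0,n})\int_0^{t_{0,n}}\frac{|g(re^{i\theta_n})|}{(1-r^2)\nu(r)}\,dr\ge n,$$
and exhibit an $H^\infty_\nu$-bounded sequence $\{f_n\}$ along which $\|S_g(f_n)\|_{H^\infty_\mu}\to\infty$.

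The opening steps mirror Theorem~\ref{th1} after the obvious replacement of $g'$ by $g$. The hypothesis $\log(g)\in\mathcal{B}$ gives a constant $K$ such that $|g'(z)|\le K|g(z)|/(1-|z|^2)$. Defining $F_n(z):=-i\log(g\circ\psi_{\eta,\theta_n}(z))$, Lemma~\ref{le2} shows $F_n|_{\Omega^{1/2}_{\gamma,\theta_n}}\in\mathcal{B}(\Omega^{1/2}_{\gamma,\theta_n})$ with constant $KC_1(\gamma,\eta)$, and Lemma~\ref{le1} with $\varepsilon=\pi/3$ then produces harmonic functions $u_n,\widetilde{u}_n$ enjoying the two properties (1) and (2). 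Setting $h_n(z):=e^{-i(u_n(z)+i\widetilde{u}_n(z))}$ yields an analytic function with $|h_n|\le e^{C_2(\gamma,\eta)}$.

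The decisive new ingredient is the test function, which is where both hypotheses on $\nu$ are consumed. Writing $\nu(z)=1/f_\nu(|z|)$ with $f_\nu\in H(\mathbb{D})$ and $|f_\nu(z)|\le f_\nu(|z|)$, I would take
$$f_n(z):=\int_0^z h_n\circ\psi^{-1}_{\eta,\theta_n}(\zeta)\,\frac{f_\nu(e^{-i\theta_n}\zeta)}{1-(e^{-i\theta_n}\zeta)^2}\,d\zeta.$$
The elementary inequality $|1-w^2|\ge 1-|w|^2$ on $\mathbb{D}$ yields $(1-|z|^2)\nu(z)|f_n'(z)|\le e^{C_2(\gamma,\eta)}$, and the quasi-normal identification $H^\infty_\nu=\mathcal{B}^\infty_{(1-|z|^2)\nu}$ upgrades this to uniform $H^\infty_\nu$-boundedness of $\{f_n\}$. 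The twist $e^{-i\theta_n}$ is placed so that along the ray $\zeta=re^{i\theta_n}$ one has $e^{-i\theta_n}\zeta=r$, giving the sharp identity $f_n'(re^{i\theta_n})=h_n(\psi^{-1}_{\eta,\theta_n}(re^{i\theta_n}))/((1-r^2)\nu(r))$. The lower estimate of $\|S_g(f_n)\|_{H^\infty_\mu}$ then proceeds \emph{verbatim} as in Theorem~\ref{th1}: absorb the $e^{i\theta_n}$ from $dz=e^{i\theta_n}dr$ into the outer modulus, split $\int_0^{t_{0,n}}$ at $r_\eta$, bound the lower piece by a constant, and bound the upper piece below by $\cos(\pi/3)\,e^{-C_2(\gamma,\eta)}\mu(t_{0,n})\int_{r_\eta}^{t_{0,n}}|g(re^{i\theta_n})|/((1-r^2)\nu(r))\,dr$, which tends to $+\infty$. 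The main obstacle I expect is precisely this construction: producing an \emph{analytic} function of modulus approximately $1/((1-|z|^2)\nu(z))$ along the ray. Analyticity of $\nu$ yields only the $1/\nu$-part through $f_\nu$; the elementary bound $|1-z^2|\ge 1-|z|^2$ provides the $1/(1-|z|^2)$-part analytically via $1/(1-z^2)$; and quasi-normality is exactly what then converts the resulting derivative bound back into an $H^\infty_\nu$-norm bound, neatly explaining why both hypotheses on $\nu$ are needed.
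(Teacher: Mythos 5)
Your proposal is correct and follows essentially the same route as the paper: the paper's proof of Theorem~\ref{th2} also takes the test functions $g_n(z)=\int_0^z f_n(\zeta)\,(1-e^{-2\theta_n i}\zeta^2)^{-1}d\zeta$ built from the Theorem~\ref{th1} data (with $F_n=-i\log(g\circ\psi_{\eta,\theta_n})$ in place of $-i\log(g'\circ\psi_{\eta,\theta_n})$), notes boundedness in $\mathcal{B}^\infty_{(1-|z|^2)\nu(z)}$ hence in $H^\infty_\nu$ by quasi-normality, and runs the Theorem~\ref{th1} lower estimate verbatim. Your antiderivative is exactly the paper's $g_n$ (your rotation $f_\nu(e^{-i\theta_n}\zeta)$ is in fact the correct normalization making $f_\nu$ real and equal to $1/\nu(r)$ on the ray, where the paper's printed $f_\nu(e^{-2\theta_n i}\zeta)$ appears to be a typo), and your accounting of where analyticity versus quasi-normality of $\nu$ is used matches the paper's argument.
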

\begin{proof}
The proof of the sufficiency is similar to Proposition~\ref{pro1}\,.

Now, assume that $S_g: H^{\infty}_\nu\rightarrow H^{\infty}_\mu$ is bounded. choose the sequence $\{g_n\}_{n=1}^\infty$ of functions defined by
$$g_n(z):=\int_0^z\frac{f_n(\zeta)}{(1-e^{-2\theta_ni}\zeta^2)}d\zeta\,,$$
where $f_n$ is defined in the proof of Theorem~\ref{th1}, then we see that $\{g_n\}_{n=1}^\infty$ is a bounded sequence in space $\mathcal{B}^\infty_{(1-|z|^2)\nu(z)}$\,. Since the $\nu$ is quasi-normal, it follows that $\{g_n\}_{n=1}^\infty$ is a bounded sequence in space $H^\infty_{\nu}$\,. The remaining of the proof is similar to the proof in Theorem~\ref{th1} and hence, we omit it.
\end{proof}

The following corollary is the most interesting part of Theorem~\ref{th2}.

\begin{corollary}\label{cor2}
If $g\in H(\mathbb{D})$ such that $\log(g)\in \mathcal{B}$, the weight $\nu$ is analytic and quasi-normal, then $S_g: H^{\infty}_\nu\rightarrow H^{\infty}$ is bounded if and only if
$$\sup_{0\leq\theta<2\pi}\int_0^1\frac{|g(re^{i\theta})|}{(1-r^2)\nu(r)}dr<+\infty\,.$$
\end{corollary}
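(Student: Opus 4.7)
The plan is to obtain Corollary~\ref{cor2} as an immediate specialization of Theorem~\ref{th2} to the constant weight $\mu\equiv1$, in direct parallel with the derivation of Corollary~\ref{cor1} from Theorem~\ref{th1}. Applying Theorem~\ref{th2} with this choice, boundedness of $S_g:H^\infty_\nu\to H^\infty$ is equivalent to
$$\sup_{0\leq t<1}\ \sup_{0\leq\theta<2\pi}\int_0^t\frac{|g(re^{i\theta})|}{(1-r^2)\nu(r)}\,dr<+\infty.$$

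The only remaining work is to simplify the iterated suprema on the left. Since the integrand is non-negative, for each fixed $\theta$ the map $t\mapsto\int_0^t\frac{|g(re^{i\theta})|}{(1-r^2)\nu(r)}\,dr$ is non-decreasing, so that its supremum over $t\in[0,1)$ agrees with $\lim_{t\to 1^-}$, which in turn equals $\int_0^1\frac{|g(re^{i\theta})|}{(1-r^2)\nu(r)}\,dr$ by monotone convergence. The two outer suprema then commute, and I would conclude
\begin{align*}
\sup_{0\leq t<1}\ \sup_{0\leq\theta<2\pi}\int_0^t\frac{|g(re^{i\theta})|}{(1-r^2)\nu(r)}\,dr
&=\sup_{0\leq\theta<2\pi}\lim_{t\to 1^-}\int_0^t\frac{|g(re^{i\theta})|}{(1-r^2)\nu(r)}\,dr\\
&=\sup_{0\leq\theta<2\pi}\int_0^1\frac{|g(re^{i\theta})|}{(1-r^2)\nu(r)}\,dr,
\end{align*}
which yields the stated characterization.

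There is essentially no obstacle here: the argument is a one-line interchange-of-suprema / monotone-convergence computation, structurally identical to the proof of Corollary~\ref{cor1}. The only mild point worth noting is that the right-hand integral may equal $+\infty$ for individual values of $\theta$, but this does not affect the equivalence, since the condition is framed as finiteness of the overall supremum, so both sides are simultaneously finite or infinite.
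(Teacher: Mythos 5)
Your proposal is correct and matches the paper's intended argument: the paper omits an explicit proof of Corollary~\ref{cor2}, but it is meant to follow from Theorem~\ref{th2} exactly as Corollary~\ref{cor1} follows from Theorem~\ref{th1}, namely by taking $\mu\equiv1$ (a legitimate weight under the paper's definition) and using monotonicity of $t\mapsto\int_0^t\frac{|g(re^{i\theta})|}{(1-r^2)\nu(r)}\,dr$ to commute the suprema and pass to the full integral. Your remark that both sides may simultaneously be $+\infty$ without harming the equivalence is a correct and slightly more careful observation than the paper records.
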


\begin{remark}\label{re4}
It should be noted that the conditions in Theorem~\ref{th2} and Corollary~\ref{cor2} are essentially new and different from those of Theorem~\ref{th1} and Corollary~\ref{cor1} which are satisfied by univalent functions.
\end{remark}

If $\mu$ is quasi-normal, then we can give the following complete characterizations of the boundedness of $S_g: H^{\infty}_\nu\rightarrow H^{\infty}_\mu$ for the general symbol $g\in H(\mathbb{D})$\,. The following lemma will be used in the proof Theorem~\ref{th3} below. The arguments are standard (see \cite{BCHMP} for example), but we provide the proof for the sake of completeness.
\begin{lemma}\label{le3}
Let the weights $\nu$ and $\mu$ be typical such that $S_g: H^{0}_\nu\rightarrow H^{0}_\mu$ is bounded. Then $S_g^{**}=S_g$ and therefore, $S_g: H^{\infty}_\nu\rightarrow H^{\infty}_\mu$ is bounded. The same statement for $S_g: \mathcal{B}^{0}_\nu\rightarrow \mathcal{B}^{0}_\mu$ is also true.
\end{lemma}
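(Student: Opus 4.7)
The plan is to exploit the identification $(H^0_\nu)^{**}=H^\infty_\nu$ for typical $\nu$, already recorded in the introduction, and to approximate each $f\in H^\infty_\nu$ by its dilations $f_r(z):=f(rz)$, $r\nearrow 1^-$, which land in the smaller space $H^0_\nu$. The boundedness of $S_g$ on $H^0_\nu$ then transfers to $H^\infty_\nu$ by a pointwise-limit argument, and the same procedure simultaneously forces $S_g^{**}=S_g$.

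First I would record three elementary properties of the dilations. (i) For $f\in H^\infty_\nu$ the function $f_r$ extends analytically across $\partial\mathbb{D}$ and is bounded on $\overline{\mathbb{D}}$; since $\nu$ is typical, $\nu(z)|f_r(z)|\to 0$ as $|z|\to 1^-$, so $f_r\in H^0_\nu$. (ii) Because $\nu$ is radial and decreasing, $\|f_r\|_{H^\infty_\nu}\leq\|f\|_{H^\infty_\nu}$. (iii) For every fixed $z\in\mathbb{D}$, $S_g f_r(z)\to S_g f(z)$ as $r\to 1^-$, because $(f_r)'(\zeta)=rf'(r\zeta)$ converges to $f'(\zeta)$ uniformly on compact subsets of $\mathbb{D}$, and $g$ is bounded on the segment from $0$ to $z$.

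With these in hand, applying the boundedness of $S_g:H^0_\nu\to H^0_\mu$ to $f_r$ yields
\[
\mu(z)|S_g f_r(z)|\leq \|S_g f_r\|_{H^\infty_\mu}\leq \|S_g\|\,\|f_r\|_{H^\infty_\nu}\leq \|S_g\|\,\|f\|_{H^\infty_\nu}
\]
for every $z\in\mathbb{D}$; letting $r\to 1^-$ via (iii) gives $\mu(z)|S_g f(z)|\leq \|S_g\|\,\|f\|_{H^\infty_\nu}$, and taking the supremum in $z$ proves the boundedness of $S_g:H^\infty_\nu\to H^\infty_\mu$. For the identification $S_g^{**}=S_g$, one observes that $(f_r)$ is a bounded net in $H^\infty_\nu=(H^0_\nu)^{**}$ converging to $f$ pointwise, hence weak-$*$; since $S_g^{**}$ is weak-$*$ to weak-$*$ continuous and coincides with $S_g$ on $H^0_\nu$, the value $S_g^{**}f$ is the weak-$*$ limit of $S_g f_r$, which by (iii) together with the uniform bound is exactly $S_g f$.

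The Bloch analogue runs along the same lines: if $f\in\mathcal{B}^\infty_\nu$ then $f_r\in\mathcal{B}^0_\nu$ with $\|f_r\|_{\mathcal{B}^\infty_\nu}\leq\|f\|_{\mathcal{B}^\infty_\nu}$, and the pointwise convergence used in (iii) is replaced by $(S_g f_r)'(z)=rf'(rz)g(z)\to f'(z)g(z)=(S_g f)'(z)$, so that $\mu(z)|(S_g f_r)'(z)|$ passes to the limit. The step that I expect to need the most care is the weak-$*$ convergence claim: one must justify that a bounded net in $H^\infty_\mu=(H^0_\mu)^{**}$ converging pointwise in $\mathbb{D}$ converges weak-$*$, using that point evaluations are bounded functionals on $H^0_\mu$ and generate enough of the predual to identify weak-$*$ limits. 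Everything else reduces to the three elementary facts about dilations recorded above.
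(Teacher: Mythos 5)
Your proposal is correct and follows essentially the same route as the paper: both arguments rest on the dilations $f_r$, their weak-$*$ convergence to $f$ in $H^\infty_\nu=(H^0_\nu)^{**}$ (justified, as you anticipate, by the uniform norm bound together with the density of the span of the evaluation functionals $\delta_z$ in the predual), and the locally uniform convergence $f_r'\to f'$ to identify $S_g^{**}f$ with $S_gf$. The only cosmetic difference is that you obtain the boundedness of $S_g\colon H^{\infty}_\nu\rightarrow H^{\infty}_\mu$ directly from the pointwise estimate $\mu(z)|S_gf_r(z)|\leq\|S_g\|\,\|f\|_{H^{\infty}_\nu}$ before identifying the bidual operator, whereas the paper reads it off the automatic boundedness of $S_g^{**}$ and then verifies $S_g^{**}=S_g$ by testing against the functionals $\delta_z$; the two computations are the same.
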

\begin{proof}
We just proved the lemma for $S_g: H^{0}_\nu\rightarrow H^{0}_\mu$, the other case follows similarly. If $S_g: H^{0}_\nu\rightarrow H^{0}_\mu$ is bounded, then $S_g^*: (H^{0}_\mu)^*\rightarrow (H^{0}_\nu)^*$ is bounded and so is $S_g^{**}: H^{\infty}_\nu\rightarrow H^{\infty}_\mu$\,. Now consider the evaluation linear functionals $\delta_z\in (H^{0}_\mu)^*$\,. It is known that such elements are dense in $(H^{0}_\mu)^*$ and for any $f\in H^{0}_\nu$, we have
$$<S_g^*(\delta_z),f>=<\delta_z,S_g(f)>=\int_0^zf'(\zeta)g(\zeta)d\zeta\,.$$
Now for $f\in H^{\infty}_\nu$, we have
$$<(S_g)^{**}(f),\delta_z>=<f,S_g^*(\delta_z)>\,.$$
Since the functions $f_r(z)=f(rz)$ converge to $f$ as $r\rightarrow1^-$ in the weak$^*$ topology, it follows that $<f_r,x^*>\rightarrow<f,x^*>$ for all $x^*\in (H^{0}_\nu)^*$\,. Thus,
$$<f,S_g^*(\delta_z)>=\lim_{r\rightarrow1^-}<f_r,S_g^*(\delta_z)>=\lim_{r\rightarrow1^-}<S_g (f_r),\delta_z>=\lim_{r\rightarrow1^-}\int_0^zf'(r\zeta)g(\zeta)d\zeta\,.$$
Since $f'_r\rightarrow f'$ locally uniformly in $\mathbb{D}$, it holds that
$$\lim_{r\rightarrow1^-}\int_0^zf'(r\zeta)g(\zeta)d\zeta=\int_0^zf'(\zeta)g(\zeta)d\zeta=<S_g(f),\delta_z>\,.$$
Accordingly, we obtain that $S_g^{**}=S_g$\,.
\end{proof}

\begin{theorem}\label{th3}
If $g\in H(\mathbb{D})$, $\nu$ satisfies property (U) and $\mu$ is quasi-normal, then the following conditions are equivalent:

(1) $S_g: H^{\infty}_\nu\rightarrow H^{\infty}_\mu$ is bounded.

(2) $\sup_{z\in\mathbb{D}}\frac{\mu(z)}{{\nu}(z)}|g(z)|<\infty\,.$

If, in addition, $\nu$ is a typical weight, then both (1) and (2) are equivalent to

(3) $S_g: H^{0}_\nu\rightarrow H^{0}_\mu$ is bounded.
\end{theorem}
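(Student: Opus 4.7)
The plan is to prove the equivalence $(1)\Leftrightarrow(2)$ directly and then, under the additional hypothesis that $\nu$ is typical, to close the loop $(2)\Rightarrow(3)\Rightarrow(1)$. The two universal facts driving the proof are: first, $(S_g f)(0)=0$ and $(S_g f)'(z)=f'(z)g(z)$, so $\|S_g f\|$ is entirely a derivative-norm; second, the quasi-normality of $\mu$ gives $H^{\infty}_\mu = \mathcal{B}^{\infty}_\omega$ (and $H^{0}_\mu=\mathcal{B}^{0}_\omega$) with equivalent norms, where $\omega(z)=(1-|z|^2)\mu(z)$. Together these yield
$$\|S_g f\|_{H^{\infty}_\mu}\asymp\sup_{z\in\mathbb{D}}(1-|z|^2)\mu(z)|f'(z)||g(z)|,$$
reducing everything to controlling $|f'(z)|$ from above and from below.

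For $(2)\Rightarrow(1)$, property (U) on $\nu$ gives the Wolf bound $|f'(z)|\leq C_\nu\|f\|_{H^{\infty}_\nu}/[\nu(z)(1-|z|^2)]$, which inserted into the display yields boundedness with constant $C_\nu\sup_z\mu(z)|g(z)|/\nu(z)$. For $(1)\Rightarrow(2)$, I would manufacture, for each $a\in\mathbb{D}$, a test function in the unit ball of $H^{\infty}_\nu$ whose derivative at $a$ matches the Wolf bound. Starting from the extremal function $h_a$ produced by the associated weight (so $\|h_a\|_{H^{\infty}_\nu}\leq 1$ and $|h_a(a)|=1/\widetilde{\nu}(a)$), multiply by the normalised M\"obius factor:
$$f_a(z):=\frac{z-a}{1-\bar a z}\,h_a(z).$$
Since $|(z-a)/(1-\bar a z)|\leq 1$ on $\mathbb{D}$, $\|f_a\|_{H^{\infty}_\nu}\leq 1$; a short computation gives $f_a'(a)=h_a(a)/(1-|a|^2)$, so $|f_a'(a)|=1/[\widetilde{\nu}(a)(1-|a|^2)]$. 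Because property (U) forces $\nu$ to be essential (so $\widetilde{\nu}\asymp\nu$), evaluating the displayed identity at $z=a$ with $f=f_a$ produces
$$\|S_g\|\gtrsim \omega(a)|f_a'(a)||g(a)|\asymp\frac{\mu(a)|g(a)|}{\nu(a)},$$
and taking the supremum over $a$ gives $(2)$.

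Now suppose in addition that $\nu$ is typical. Then $(3)\Rightarrow(1)$ is immediate from Lemma~\ref{le3}, since $\mu$ is also typical (being quasi-normal). For $(2)\Rightarrow(3)$, the quasi-normality of $\mu$ identifies $H^{0}_\mu$ with $\mathcal{B}^{0}_\omega$, so I must show that for every $f\in H^{0}_\nu$ one has $(1-|z|^2)\mu(z)|f'(z)||g(z)|\to 0$ as $|z|\to 1^-$. Condition $(2)$ gives $\mu(z)|g(z)|\leq M\nu(z)$, so it suffices to prove $(1-|z|^2)\nu(z)|f'(z)|\to 0$ for $f\in H^{0}_\nu$. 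This is the step I expect to be the main obstacle, since it is not immediate from $\nu$-membership of $f$ alone; I would deduce it from a Cauchy estimate on the disk of radius $(1-|z|)/2$ around $z$. Because $f\in H^{0}_\nu$, for each $\varepsilon>0$ one has $|f(\zeta)|<\varepsilon/\nu(\zeta)$ for $|\zeta|$ near the boundary, so the Cauchy estimate and the monotonicity of $\nu$ yield $|f'(z)|\lesssim\varepsilon/[(1-|z|)\nu((1+|z|)/2)]$; property (U) of $\nu$ then bounds $\nu(z)/\nu((1+|z|)/2)$ uniformly (via the almost-increasing ratio $\nu/\nu_\alpha$, which makes the corresponding ratio of $\nu_\alpha$'s bounded on $[|z|,(1+|z|)/2]$). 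With this Cauchy step in hand, the chain closes and the remainder of the argument is formal.
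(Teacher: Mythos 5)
Your proof is correct, but it takes a genuinely different route from the paper's in both nontrivial steps. For $(1)\Leftrightarrow(2)$, the paper does not argue directly: after identifying $H^{\infty}_\mu=\mathcal{B}^{\infty}_{\omega}$ via quasi-normality (as you do), it invokes Theorem~7 of \cite{MZ} with $\varphi(z)\equiv z$ as a black box to characterize the boundedness of $M_gD: H^{\infty}_\nu\rightarrow H^{\infty}_{\omega}$. You instead prove sufficiency from Wolf's estimate $|f'(z)|\leq C_\nu\|f\|_{H^{\infty}_\nu}/[\nu(z)(1-|z|^2)]$ (the same \cite{WOLF} lemma the paper quotes before Proposition~\ref{pro1}) and necessity via the explicit test functions $f_a(z)=\frac{z-a}{1-\bar a z}h_a(z)$; your computation is sound, since the M\"obius factor vanishes at $a$, leaving $f_a'(a)=h_a(a)/(1-|a|^2)$ with $|h_a(a)|=1/\widetilde{\nu}(a)$, and essentiality of $\nu$ (a consequence of property (U), as the paper notes) is exactly what replaces $\widetilde{\nu}$ by $\nu$ in condition (2). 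For $(2)\Rightarrow(3)$, the paper argues softly: it verifies $M_gD(p)\in H^0_{\omega}$ for polynomials $p$, then uses density of polynomials in $H^0_\nu$ (valid since $\nu$ is typical), the boundedness from (1), and closedness of $H^0_{\omega}$ in $H^{\infty}_{\omega}$. You instead prove the quantitative embedding fact that $f\in H^0_\nu$ forces $(1-|z|^2)\nu(z)|f'(z)|\rightarrow0$, via a Cauchy estimate on the disk $D(z,(1-|z|)/2)$ together with property (U); your key ratio bound is correct, since almost-increase of $\nu/\nu_\alpha$ plus $\nu_\alpha(|z|)/\nu_\alpha((1+|z|)/2)\leq4^\alpha$ gives $\sup_z\nu(|z|)/\nu((1+|z|)/2)<\infty$, and this is genuinely where (U) is needed (it fails for, e.g., $\nu(r)=e^{-1/(1-r)}$). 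The step $(3)\Rightarrow(1)$ is identical in both proofs, via Lemma~\ref{le3}. What each approach buys: the paper's is shorter and leans on the literature and a soft density argument; yours is self-contained with explicit constants, makes visible where each hypothesis enters, and in effect reproves one inclusion of Lusky's theorem (property (U) yields $H^0_\nu\hookrightarrow\mathcal{B}^0_{(1-|z|^2)\nu(z)}$) rather than routing through polynomial approximation. The only point you share implicitly with the paper and could flag is that the set equality $H^{\infty}_\mu=\mathcal{B}^{\infty}_{\omega}$ upgrades to norm equivalence by the closed graph theorem, which justifies your display $\|S_gf\|_{H^{\infty}_\mu}\asymp\sup_{z\in\mathbb{D}}\omega(z)|f'(z)||g(z)|$.
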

\begin{proof}
Since $\mu$ is quasi-normal, $H^{\infty}_\mu=\mathcal{B}^{\infty}_{\omega}$\,. Thus the boundedness of $S_g: H^{\infty}_\nu\rightarrow H^{\infty}_\mu$ is equivalent to the boundedness of $M_gD: H^{\infty}_\nu\rightarrow H^{\infty}_{\omega}$, where $M_g$ is the multiplication operator and $D$ is the differentiation operator, respectively. Now by choosing $\varphi(z)\equiv z$ in \cite[Theorem~7]{MZ}, the boundedness of $M_gD: H^{\infty}_\nu\rightarrow H^{\infty}_{\omega}$ is equivalent to the following condition
$$\sup_{z\in\mathbb{D}}\frac{\omega(z)}{(1-|z|^2){\nu}(z)}|g(z)|<\infty\,,$$
which is equivalent to condition (2), that is, (1) and (2) are equivalent.

Assume that $\nu$ is typical, then since $\mu$ is quasi-normal, $\mu$ is also typical. Thus, by Lemma~\ref{le3}, (3) implies (1).

Now, it remains to show that (2) implies (3). For any $f\in H^0_{{\nu}}$\,, we want to show that $S_g(f)\in \mathcal{B}^0_{\omega}$, or equivalently, $M_gD(f)\in H^0_{\omega}$\,. For any polynomial $p\in H^0_{{\nu}}$, by (2), we see that
\begin{equation}\begin{split}\nonumber
\lim_{|z|\rightarrow1^-}\omega(z)|p'(z)||g(z)|&=\lim_{|z|\rightarrow1^-}(1-|z|^2)\mu(z)|p'(z)||g(z)|\\
&\leq \left(\sup_{z\in\mathbb{D}}\frac{\mu(z)}{{\nu}(z)}|g(z)|\right)\lim_{|z|\rightarrow1^-}{\nu}(z)(1-|z|^2)|p'(z)|=0\,.
\end{split}\end{equation}
Thus $M_gD(p)\in H^0_{\omega}$\,. Since the of all polynomials is dense in $H^0_{{\nu}}$, there exists a sequence of polynomials $\{p_n\}$ such that $\|p_n-f\|_{H^0_{{\nu}}}\rightarrow0$ as $n\rightarrow\infty$\,. Now by (1), $M_gD: H^{\infty}_{{\nu}}\rightarrow H^{\infty}_\omega$ is bounded, it follows that
$$\|M_gD(p_n)-M_gD(f)\|_{H^{\infty}_\omega}\leq \|M_gD\|\|p_n-f\|_{H^{\infty}_{{\nu}}}\rightarrow0\,,$$
as $n\rightarrow\infty$\,. Thus $M_gD(f)\in H^0_{\omega}$, since $H^0_{\omega}$ is a closed subspace of $H^\infty_{\omega}$\,.
\end{proof}

Now we are to investigate the conditions for the boundedness of Volterra type operators $T_g$ and $S_g$ between Bloch type spaces $\mathcal{B}^\infty_{\nu}$ and $\mathcal{B}^\infty_{\mu}$\,. Since the corresponding conditions for the boundedness and compactness of Volterra type operators $T_g$ between Bloch type spaces $\mathcal{B}^\infty_{\nu}$ and $\mathcal{B}^\infty_{\mu}$ had been studied in \cite{ELPSW}, thus, we concentrate on the companion operator $S_g$\,. First, we prove the following lemma.
\begin{lemma}\label{le4}
Let $\nu$ and $\mu$ be weights. Then the product of the multiplication operator $M_g$ and the differentiation operator $D$, namely, $M_gD$ is bounded from Bloch type space $\mathcal{B}^\infty_{\nu}$ into weighted Banach space $H^\infty_{\mu}$ if and only if
$$\sup_{z\in\mathbb{D}}\frac{\mu(z)}{\widetilde{\nu}(z)}|g(z)|<\infty\,.$$
\end{lemma}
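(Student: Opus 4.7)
The plan is to reduce the problem about $M_gD$ on the Bloch type space to a multiplier-type problem on the weighted Banach space, by observing that $\|F\|_{\mathcal{B}^\infty_\nu}$ controls $\|F'\|_{H^\infty_\nu}$ and that $\|\cdot\|_{H^\infty_\nu}=\|\cdot\|_{H^\infty_{\widetilde{\nu}}}$. The test functions for necessity will be produced by integrating the extremal functions $f_a$ from property~(2) of the associated weight.

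For the sufficiency, suppose $\sup_{z\in\mathbb{D}}\frac{\mu(z)}{\widetilde{\nu}(z)}|g(z)|=:M<\infty$. For any $F\in\mathcal{B}^\infty_\nu$, the definition of the Bloch norm gives $\nu(z)|F'(z)|\leq\|F\|_{\mathcal{B}}$ for every $z\in\mathbb{D}$, so that $F'\in H^\infty_\nu$ with $\|F'\|_{H^\infty_\nu}\leq\|F\|_{\mathcal{B}}$. Using property~(1) of the associated weight, namely $\|\cdot\|_{H^\infty_\nu}=\|\cdot\|_{H^\infty_{\widetilde{\nu}}}$, we upgrade this to the pointwise estimate $\widetilde{\nu}(z)|F'(z)|\leq\|F\|_{\mathcal{B}}$. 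Then
$$\mu(z)|g(z)||F'(z)|=\frac{\mu(z)|g(z)|}{\widetilde{\nu}(z)}\cdot\widetilde{\nu}(z)|F'(z)|\leq M\|F\|_{\mathcal{B}},$$
and taking the supremum over $z$ shows $\|M_gDF\|_{H^\infty_\mu}\leq M\|F\|_{\mathcal{B}}$, hence $M_gD:\mathcal{B}^\infty_\nu\to H^\infty_\mu$ is bounded.

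For the necessity, suppose $M_gD:\mathcal{B}^\infty_\nu\to H^\infty_\mu$ is bounded. Fix an arbitrary $a\in\mathbb{D}$ and invoke property~(2) of the associated weight to pick $f_a$ in the closed unit ball of $H^\infty_\nu$ with $|f_a(a)|=1/\widetilde{\nu}(a)$. Define its antiderivative
$$F_a(z):=\int_0^z f_a(\zeta)\,d\zeta.$$
Then $F_a(0)=0$ and $F_a'=f_a$, so $\|F_a\|_{\mathcal{B}}=\sup_{z\in\mathbb{D}}\nu(z)|f_a(z)|=\|f_a\|_{H^\infty_\nu}\leq 1$, and hence $F_a\in\mathcal{B}^\infty_\nu$ with Bloch norm at most $1$. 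Evaluating at $a$,
$$\|M_gD\|\geq\|M_gD F_a\|_{H^\infty_\mu}\geq\mu(a)|g(a)||f_a(a)|=\frac{\mu(a)|g(a)|}{\widetilde{\nu}(a)}.$$
Since $a\in\mathbb{D}$ was arbitrary, taking the supremum yields the desired estimate $\sup_{z\in\mathbb{D}}\frac{\mu(z)}{\widetilde{\nu}(z)}|g(z)|\leq\|M_gD\|<\infty$.

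The proof is essentially mechanical once one recognises the two correct uses of the associated weight: property~(1) is what upgrades the Bloch estimate into one involving $\widetilde{\nu}$ (necessary because in general $\nu\leq\widetilde{\nu}$ with the inequality possibly strict), and property~(2) provides the family of extremal test functions whose antiderivatives detect the pointwise size of $g$ in the $\widetilde{\nu}$-metric. There is no serious obstacle; the mild point to double-check is simply that the antiderivative $F_a$ of a unit-ball element of $H^\infty_\nu$ has Bloch norm at most $1$, which is immediate from $F_a(0)=0$.
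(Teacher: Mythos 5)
Your proof is correct, and it is worth noting how it relates to the paper's own argument, which is shorter but citation-based: the paper observes that differentiation $D$ is an isometry from $\mathcal{B}^\infty_\nu$ onto $H^\infty_\nu$ (the antiderivative vanishing at $0$ giving the inverse), so that $M_gD:\mathcal{B}^\infty_\nu\rightarrow H^\infty_\mu$ is bounded if and only if the multiplication operator $M_g:H^\infty_\nu\rightarrow H^\infty_\mu$ is bounded, and then simply invokes the known characterization of bounded multiplication operators between weighted Banach spaces from \cite{CHD,ZN}, which is exactly the condition $\sup_{z\in\mathbb{D}}\mu(z)|g(z)|/\widetilde{\nu}(z)<\infty$. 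What you do differently is inline a self-contained proof of that multiplier characterization instead of citing it: for sufficiency you upgrade the Bloch estimate $\sup_z\nu(z)|F'(z)|\leq\|F\|_{\mathcal{B}}$ to the associated weight via $\|\cdot\|_{H^\infty_\nu}=\|\cdot\|_{H^\infty_{\widetilde{\nu}}}$ (and you correctly identify that this upgrade is genuinely needed, since $\nu\leq\widetilde{\nu}$ may be strict, so the hypothesis involving $\widetilde{\nu}$ is weaker than the one involving $\nu$); for necessity you integrate the extremal functions $f_a$ with $|f_a(a)|=1/\widetilde{\nu}(a)$, and the verification $\|F_a\|_{\mathcal{B}}\leq1$ is sound because $F_a(0)=0$. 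This is precisely the mechanism underlying the cited results, so the mathematical content coincides; your version buys independence from the references and makes transparent exactly where each property of the associated weight enters, while the paper's version buys brevity by outsourcing the multiplier step.
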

\begin{proof}
Since the differentiation operator $D$ is an isometry from Bloch type space $\mathcal{B}^\infty_{\nu}$ onto weighted Banach space $H^\infty_{\nu}$\,, it follows that $M_gD: \mathcal{B}^\infty_{\nu}\rightarrow H^\infty_{\mu}$ is bounded if and only if $M_g: H^\infty_{\nu}\rightarrow H^\infty_{\mu}$ is bounded, which, according to \cite{CHD,ZN}, is equivalent to the condition:
$\sup_{z\in\mathbb{D}}|g(z)|{\mu(z)}/{\widetilde{\nu}(z)}<\infty\,.$
\end{proof}

Then by Lemma~\ref{le4}, using the similar proof of Theorem~\ref{th3}, we obtain
\begin{theorem}\label{th4}
If $g\in H(\mathbb{D})$, $\nu$ and $\mu$ are weights, then the following conditions are equivalent:

(1) $S_g: \mathcal{B}^{\infty}_\nu\rightarrow \mathcal{B}^{\infty}_\mu$ is bounded.

(2) $\sup_{z\in\mathbb{D}}|g(z)|{\mu(z)}/{\widetilde{\nu}(z)}<\infty\,.$

If, in addition, $\nu$ and $\mu$ are typical weights, then both (1) and (2) are equivalent to

(3) $S_g: \mathcal{B}^{0}_\nu\rightarrow \mathcal{B}^{0}_\mu$ is bounded.
\end{theorem}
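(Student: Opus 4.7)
The plan is to mirror the proof of Theorem~\ref{th3} step by step, with Lemma~\ref{le4} replacing the $M_gD$ result from \cite{MZ} that was invoked there.

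For the equivalence $(1)\Leftrightarrow(2)$, I would first record the identity
$$\|S_g f\|_{\mathcal{B}^\infty_\mu}=|(S_g f)(0)|+\sup_{z\in\mathbb{D}}\mu(z)|(S_g f)'(z)|=\sup_{z\in\mathbb{D}}\mu(z)|f'(z)||g(z)|=\|M_gD f\|_{H^\infty_\mu},$$
valid for every $f\in H(\mathbb{D})$ since $(S_g f)(0)=0$. Thus $S_g:\mathcal{B}^\infty_\nu\to\mathcal{B}^\infty_\mu$ is bounded if and only if $M_gD:\mathcal{B}^\infty_\nu\to H^\infty_\mu$ is bounded, and Lemma~\ref{le4} gives the equivalence with condition~$(2)$ at once.

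Assuming in addition that $\nu$ and $\mu$ are typical, I would argue $(1)\Leftrightarrow(3)$ in two steps. The direction $(3)\Rightarrow(1)$ is immediate from the second assertion in Lemma~\ref{le3}, which says precisely that boundedness of $S_g$ on the little Bloch spaces lifts through the biadjoint to boundedness on the full Bloch spaces. For $(2)\Rightarrow(3)$, I would imitate the polynomial-approximation step from Theorem~\ref{th3}. For any polynomial $p$, the splitting
$$\mu(z)|p'(z)||g(z)|\leq\left(\sup_{z\in\mathbb{D}}\frac{\mu(z)|g(z)|}{\widetilde{\nu}(z)}\right)\widetilde{\nu}(z)|p'(z)|$$
combined with the typicality of $\widetilde{\nu}$ (inherited from that of $\nu$) and the boundedness of $|p'|$ on $\overline{\mathbb{D}}$ yields $S_g(p)\in\mathcal{B}^0_\mu$. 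Density of polynomials in $\mathcal{B}^0_\nu$ for typical $\nu$ (recorded in the introduction), together with the boundedness provided by $(1)$ and the fact that $\mathcal{B}^0_\mu$ is closed in $\mathcal{B}^\infty_\mu$, then gives $S_g(f)\in\mathcal{B}^0_\mu$ for every $f\in\mathcal{B}^0_\nu$.

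There is no substantial obstacle, since all the analytic content has been isolated in Lemma~\ref{le4} and Lemma~\ref{le3}. The only point requiring minor attention is confirming that Lemma~\ref{le3} does cover the Bloch-to-Bloch setting needed for $(3)\Rightarrow(1)$, but the lemma is formulated so as to include it explicitly, so one simply appeals to that second half. Everything else reduces to the same algebraic and density arguments already used for Theorem~\ref{th3}.
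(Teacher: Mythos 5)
Your proposal is correct and follows exactly the route the paper intends: the paper proves Theorem~\ref{th4} by the one-line reduction ``by Lemma~\ref{le4}, using the similar proof of Theorem~\ref{th3}'', which is precisely your argument --- the identity $\|S_gf\|_{\mathcal{B}^\infty_\mu}=\|M_gDf\|_{H^\infty_\mu}$ plus Lemma~\ref{le4} for $(1)\Leftrightarrow(2)$, the Bloch half of Lemma~\ref{le3} for $(3)\Rightarrow(1)$, and the polynomial-density argument (with $\widetilde{\nu}$ typical replacing the factor $(1-|z|^2)$ used in Theorem~\ref{th3}) for $(2)\Rightarrow(3)$. You have simply written out in full what the paper leaves implicit, and each step checks out.
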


\begin{theorem}\label{th5}
If $g\in H(\mathbb{D})$, $\nu$ satisfies property (U) and $\mu$ is a weight, then the following conditions are equivalent:

(1) $S_g: H^{\infty}_\nu\rightarrow \mathcal{B}^{\infty}_\mu$ is bounded.

(2) $\sup_{z\in\mathbb{D}}\frac{\mu(z)}{(1-|z|^2){\nu}(z)}|g(z)|<\infty\,.$

If, in addition, $\nu$ and $\mu$ are typical weights, then both (1) and (2) are equivalent to

(3) $S_g: H^{0}_\nu\rightarrow \mathcal{B}^{0}_\mu$ is bounded.
\end{theorem}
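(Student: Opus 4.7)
My plan is to mirror the proof of Theorem~\ref{th3}, with a simplification: since the target space is already a Bloch type space $\mathcal{B}^\infty_\mu$, there is no need to invoke quasi-normality of $\mu$ to pass between $H^\infty$- and $\mathcal{B}^\infty$-norms; the Bloch structure is built into the target.

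The first step is a reduction of the operator. Because $(S_g f)(0)=0$ and $(S_g f)'(z)=f'(z)g(z)$, the definition of $\|\cdot\|_{\mathcal{B}}$ gives $\|S_g f\|_{\mathcal{B}^\infty_\mu}=\sup_{z\in\mathbb{D}}\mu(z)|f'(z)||g(z)|=\|M_gD f\|_{H^\infty_\mu}$, so (1) is equivalent to the boundedness of $M_g D: H^\infty_\nu\to H^\infty_\mu$. The second step is then a direct application of \cite[Theorem~7]{MZ} with $\varphi(z)\equiv z$, exactly as in the proof of Theorem~\ref{th3}, producing the condition $\sup_{z\in\mathbb{D}}\mu(z)|g(z)|/((1-|z|^2)\nu(z))<\infty$, which is (2). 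One could equivalently verify this by hand: sufficiency uses the Wolf derivative estimate from \cite[Lemma~5]{WOLF} valid under property~(U), and necessity uses standard peak functions in $H^\infty_\nu$ at a point $w$ with derivative of size comparable to $1/(\nu(w)(1-|w|^2))$.

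For the additional equivalence with (3) when $\nu$ and $\mu$ are typical, the implication (2) $\Rightarrow$ (3) proceeds along the polynomial-density lines of Theorem~\ref{th3}: for a polynomial $p$, condition (2) together with $\lim_{|z|\to 1^-}(1-|z|^2)\nu(z)|p'(z)|=0$ yields $S_g p\in\mathcal{B}^0_\mu$, and because $\mathcal{B}^0_\mu$ is closed in $\mathcal{B}^\infty_\mu$ and polynomials are dense in $H^0_\nu$, the boundedness supplied by (1) extends this to every $f\in H^0_\nu$. For (3) $\Rightarrow$ (1), I would argue by dilation: for $f\in H^\infty_\nu$, the dilates $f_r(z)=f(rz)$ lie in $H^0_\nu$ with $\|f_r\|_{H^\infty_\nu}\leq\|f\|_{H^\infty_\nu}$ (since $\nu$ is radial and decreasing), hence (3) provides a uniform-in-$r$ bound on $\mu(z)|f'_r(z)||g(z)|$ which, by letting $r\to 1^-$ and using $f'_r(z)\to f'(z)$ locally uniformly in $\mathbb{D}$, passes to $f$. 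Alternatively, one could adapt the bidual argument of Lemma~\ref{le3} to this mixed $H^0\to\mathcal{B}^0$ setting. I do not anticipate a genuine obstacle; the only care needed is tracking which hypothesis on $\nu$ (radial-decreasing, property~(U), or typical) is invoked at each reduction, to ensure the input of \cite[Theorem~7]{MZ} is legitimate under property~(U) alone.
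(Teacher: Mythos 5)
Your proposal is correct and takes essentially the route the paper intends for Theorem~\ref{th5}: reduce (1) to the boundedness of $M_gD\colon H^{\infty}_\nu\rightarrow H^{\infty}_\mu$ via $(S_gf)(0)=0$ and $(S_gf)'=f'g$, apply \cite[Theorem~7]{MZ} with $\varphi(z)\equiv z$ exactly as in Theorem~\ref{th3} (your observation that quasi-normality of $\mu$ is not needed here, since the target is already of Bloch type, is precisely why the paper can take $\mu$ arbitrary), and settle the part about (3) by polynomial density for (2)$\Rightarrow$(3) together with a Lemma~\ref{le3}-type passage for (3)$\Rightarrow$(1). Your direct dilation argument for (3)$\Rightarrow$(1) is just the unpackaged form of the bidual proof of Lemma~\ref{le3}, which itself rests on the same dilates $f_r$, the bound $\|f_r\|_{H^{\infty}_\nu}\leq\|f\|_{H^{\infty}_\nu}$, and locally uniform convergence of $f_r'$ to $f'$, so the substance is identical.
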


\begin{theorem}\label{th6}
If $g\in H(\mathbb{D})$, $\nu$ is a weight and $\mu$ is a quasi-normal weight, then the following conditions are equivalent:

(1) $S_g: \mathcal{B}^{\infty}_\nu\rightarrow H^{\infty}_\mu$ is bounded.

(2) $\sup_{z\in\mathbb{D}}\frac{(1-|z|^2)\mu(z)}{\widetilde{\nu}(z)}|g(z)|<\infty\,.$

If, in addition, $\nu$ is a typical weight, then both (1) and (2) are equivalent to

(3) $S_g: \mathcal{B}^{0}_\nu\rightarrow H^{0}_\mu$ is bounded.
\end{theorem}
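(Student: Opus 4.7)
The plan is to reduce Theorem~\ref{th6} to the machinery already set up, exactly as in the proof of Theorem~\ref{th3}, by exploiting the quasi-normality of $\mu$ together with Lemma~\ref{le4} and Lemma~\ref{le3}.

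First I would use the quasi-normality of $\mu$ to rewrite $H^\infty_\mu=\mathcal{B}^\infty_\omega$, where $\omega(z)=(1-|z|^2)\mu(z)$. Since $(S_gf)'=f'g=M_gDf$, the boundedness of $S_g\colon \mathcal{B}^\infty_\nu\to H^\infty_\mu$ is equivalent to the boundedness of $M_gD\colon \mathcal{B}^\infty_\nu\to H^\infty_\omega$. By Lemma~\ref{le4} with target weight $\omega$, the latter is equivalent to
\[
\sup_{z\in\mathbb{D}}\frac{\omega(z)}{\widetilde{\nu}(z)}|g(z)|=\sup_{z\in\mathbb{D}}\frac{(1-|z|^2)\mu(z)}{\widetilde{\nu}(z)}|g(z)|<\infty,
\]
which is condition~(2). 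This establishes $(1)\Leftrightarrow(2)$.

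For the equivalence with (3) under the additional assumption that $\nu$ is typical, I observe that $\mu$ quasi-normal implies $\mu$ typical, so $H^0_\mu=\mathcal{B}^0_\omega$. Hence condition~(3) is just the boundedness of $S_g\colon \mathcal{B}^0_\nu\to\mathcal{B}^0_\omega$. Applying Lemma~\ref{le3} in its $\mathcal{B}^0\to\mathcal{B}^0$ form, this boundedness lifts via $S_g^{**}=S_g$ to give boundedness of $S_g\colon \mathcal{B}^\infty_\nu\to\mathcal{B}^\infty_\omega=H^\infty_\mu$, which is $(1)$. Thus $(3)\Rightarrow(1)$.

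It remains to show $(2)\Rightarrow(3)$, which mirrors the last paragraph of the proof of Theorem~\ref{th3}. For any polynomial $p$, note that by property~(3) of the associated weight, $\nu$ typical implies $\widetilde{\nu}$ typical, so $\widetilde{\nu}(z)|p'(z)|\to 0$ as $|z|\to 1^-$; therefore
\[
\lim_{|z|\to 1^-}\omega(z)|p'(z)||g(z)|\leq\Bigl(\sup_{z\in\mathbb{D}}\frac{\omega(z)}{\widetilde{\nu}(z)}|g(z)|\Bigr)\lim_{|z|\to 1^-}\widetilde{\nu}(z)|p'(z)|=0,
\]
so $S_gp\in\mathcal{B}^0_\omega=H^0_\mu$. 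For general $f\in\mathcal{B}^0_\nu$, density of polynomials in $\mathcal{B}^0_\nu$ together with the boundedness of $S_g\colon\mathcal{B}^\infty_\nu\to H^\infty_\mu$ (already established) yields $S_g p_n\to S_g f$ in $H^\infty_\mu$; closedness of $H^0_\mu$ in $H^\infty_\mu$ then gives $S_gf\in H^0_\mu$. I do not foresee any significant obstacle: the argument is essentially a bookkeeping exercise identifying $H^\infty_\mu$ with $\mathcal{B}^\infty_\omega$ and then quoting Lemma~\ref{le4}, Lemma~\ref{le3}, and the typicality of $\widetilde{\nu}$; the only points that require a line of verification are confirming that Lemma~\ref{le3} is applicable in the $\mathcal{B}^0\to\mathcal{B}^0$ setting with $\omega$ in place of $\mu$, and that both $\nu$ and $\omega$ remain typical so that polynomials are dense in $\mathcal{B}^0_\nu$ and $H^0_\mu$ is closed in $H^\infty_\mu$.
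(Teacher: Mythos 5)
Your argument is correct and coincides with the paper's intended proof: Theorem~\ref{th6} is obtained there precisely by identifying $H^{\infty}_\mu$ with $\mathcal{B}^{\infty}_{\omega}$, $\omega(z)=(1-|z|^2)\mu(z)$, applying Lemma~\ref{le4} with target weight $\omega$ to get $(1)\Leftrightarrow(2)$, and repeating the Lemma~\ref{le3}/polynomial-density scheme from the proof of Theorem~\ref{th3} for the equivalence with $(3)$. One cosmetic point only: the identification $H^{0}_\mu=\mathcal{B}^{0}_{\omega}$ is the \emph{definition} of quasi-normality rather than a consequence of $\mu$ being typical; typicality of $\mu$ (automatic for quasi-normal weights) is what you need for the duality $H^{\infty}_\mu=\mathcal{B}^{\infty}_{\omega}$ and, together with the typicality of $\omega$, for invoking Lemma~\ref{le3}.
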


However, when $\mu$ is not a quasi-normal weight, then by the similar proof in Theorem~\ref{th2}, we can give the following complete characterization for the boundedness of $S_g: \mathcal{B}^{\infty}_\nu\rightarrow H^{\infty}_\mu$ when $\nu$ is an analytic weight and $\log(g)\in \mathcal{B}$.

\begin{proposition}\label{pro2}
If $g\in H(\mathbb{D})$ such that $\log(g)\in \mathcal{B}$, the weight $\nu$ is analytic and $\mu$ is an arbitrary weight, then $S_g: \mathcal{B}^{\infty}_\nu\rightarrow H^{\infty}_\mu$ is bounded if and only if
$$\sup_{0\leq t<1}\ \sup_{0\leq\theta<2\pi}\mu(t)\int_0^t\frac{|g(re^{i\theta})|}{\nu(r)}dr<+\infty\,.$$
\end{proposition}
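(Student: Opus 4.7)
The plan is to reduce this proposition directly to Theorem~\ref{th1} via the differentiation operator, which avoids re-running the sectorial harmonic-extension machinery from Theorems~\ref{th1} and \ref{th2}. Set $G(z) := \int_0^z g(\zeta)\,d\zeta$, so that $G\in H(\mathbb{D})$, $G'=g$, and in particular $\log(G')=\log(g)\in\mathcal{B}$. The identity
$$S_g(f)(z) = \int_0^z f'(\zeta)g(\zeta)\,d\zeta = \int_0^z f'(\zeta)G'(\zeta)\,d\zeta = T_G(f')(z)$$
factors $S_g$ as $T_G\circ D$, where $D$ is the differentiation operator.

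Next I would record two elementary facts about $D$: it maps $\mathcal{B}^{\infty}_\nu$ into $H^{\infty}_\nu$ contractively, and it admits the right inverse $J(h)(z):=\int_0^z h(\zeta)\,d\zeta$ from $H^{\infty}_\nu$ back into $\mathcal{B}^{\infty}_\nu$; this $J$ is an isometric embedding because $J(h)(0)=0$ forces $\|J(h)\|_{\mathcal{B}^{\infty}_\nu}=\sup_{z\in\mathbb{D}}\nu(z)|h(z)|=\|h\|_{H^{\infty}_\nu}$. Combining these with the factorization gives at once that $S_g:\mathcal{B}^{\infty}_\nu\to H^{\infty}_\mu$ is bounded if and only if $T_G:H^{\infty}_\nu\to H^{\infty}_\mu$ is bounded: one direction uses $S_g=T_G\circ D$ with $\|D\|\le 1$, and the other uses $T_G=S_g\circ J$ with $J$ isometric.

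Finally, I would apply Theorem~\ref{th1} with symbol $G$ in place of $g$. All hypotheses match: $\nu$ is analytic, $\mu$ is an arbitrary weight, and $\log(G')=\log(g)\in\mathcal{B}$. Theorem~\ref{th1} therefore asserts that $T_G$ is bounded if and only if
$$\sup_{0\le t<1}\ \sup_{0\le\theta<2\pi}\mu(t)\int_0^t\frac{|G'(re^{i\theta})|}{\nu(r)}\,dr < +\infty,$$
and since $G'=g$ this is precisely the condition in the statement. There is essentially no obstacle beyond noticing the factorization $S_g=T_G\circ D$; the only point that requires any verification is that $J$ is genuinely isometric, which boils down to the fact that the antiderivative $J(h)$ vanishes at the origin, so the $|f(0)|$ term in the Bloch norm drops out. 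Note that this route \emph{does not} require $\nu$ to be quasi-normal, which is exactly why Proposition~\ref{pro2} is able to handle analytic weights that fall outside the scope of the quasi-normal setting used in Theorem~\ref{th2}.
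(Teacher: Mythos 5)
Your proof is correct, and it takes a genuinely cleaner route than the paper, which offers no standalone argument for Proposition~\ref{pro2}: the paper merely invokes ``the similar proof in Theorem~\ref{th2}'', i.e., it implicitly re-runs the sectorial machinery of Theorem~\ref{th1} (the harmonic approximation of $\arg g$ via Lemmas~\ref{le1} and~\ref{le2}, the test functions $f_n$, and the real-part lower bound), with test functions that are antiderivatives of the $f_n$ and which now lie in $\mathcal{B}^\infty_\nu$ directly, so that the quasi-normality needed in Theorem~\ref{th2} to place such antiderivatives in $H^\infty_\nu$ never enters. Your factorization $S_g=T_G\circ D$ with $G'=g$, together with $T_G=S_g\circ J$, compresses exactly this into a black-box reduction: $D$ is contractive from $\mathcal{B}^\infty_\nu$ to $H^\infty_\nu$ since dropping the $|f(0)|$ term only decreases the norm, $J$ is isometric since $J(h)(0)=0$, and $D\circ J=\mathrm{id}$ on $H^\infty_\nu$, whence $\|S_g\|=\|T_G\|$ and Theorem~\ref{th1} applied to $T_G$ (legitimately, as $\log G'=\log g\in\mathcal{B}$, $\nu$ is analytic, $\mu$ arbitrary) gives verbatim the stated criterion. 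What your route buys is that no estimate has to be redone --- the proposition becomes a genuine corollary of Theorem~\ref{th1} rather than a ``similar proof'' --- and your closing remark correctly isolates why quasi-normality is dispensable here, namely that the domain already is the Bloch-type space, so the norm-conversion step forcing quasi-normality in Theorem~\ref{th2} does not arise. Two small points worth noting: first, your splitting into a contraction $D$ and an isometric right inverse $J$ is the precise version of the paper's looser claim in Lemma~\ref{le4} that $D$ is an ``isometry'' from $\mathcal{B}^\infty_\nu$ onto $H^\infty_\nu$ (true only modulo constants); second, the identical factorization instantly yields the compactness analogue Proposition~\ref{pro4} from Theorem~\ref{th7}, since $S_g=T_G\circ D$ and $T_G=S_g\circ J$ show $S_g$ is compact if and only if $T_G$ is.
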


\section{\bf The compactness of Volterra type operators}
In this section, we firstly characterize the compactness of $T_g: H^{\infty}_\nu\rightarrow H^{\infty}_\mu$ and its companion operator $S_g: H^{\infty}_\nu\rightarrow H^{\infty}_\mu$\,. In the end, we characterize the corresponding questions for Bloch type spaces.

By standard arguments, we have the following lemma.
\begin{lemma}\label{le5}
Let $\nu$ and $\mu$ be weights. If  $T_g: H^{\infty}_\nu\rightarrow H^{\infty}_\mu$ is bounded, then the following two statements are equivalent:

(1) $T_g: H^{\infty}_\nu\rightarrow H^{\infty}_\mu$ is compact;

(2) For  any bounded sequence $\{f_n\}_{n=1}^\infty\subset H^{\infty}_\nu$ such that $f_n(z)\rightarrow0$ uniformly on any compact subset of $\mathbb{D}$ as $n\rightarrow\infty$, it holds that $\|T_g(f_n)\|_{H^{\infty}_\mu}\rightarrow0$ as $n\rightarrow\infty$.
\end{lemma}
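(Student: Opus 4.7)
The plan is to prove this equivalence using standard normal family arguments, which are routine for spaces of analytic functions defined by weighted sup norms. The key ingredient I will use repeatedly is the pointwise bound $|f(z)| \leq \|f\|_{H^\infty_\nu}/\nu(z)$, valid for every $f \in H^\infty_\nu$ and $z \in \mathbb{D}$, combined with the fact that $\nu$ is continuous and strictly positive on compact subsets of $\mathbb{D}$. Together these imply that bounded subsets of $H^\infty_\nu$ are uniformly bounded on compact subsets of $\mathbb{D}$, hence are normal families by Montel's theorem. A second routine ingredient is that norm convergence in $H^\infty_\mu$ forces pointwise convergence (by the same weight estimate).

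For the direction $(2) \Rightarrow (1)$, I would take an arbitrary bounded sequence $\{f_n\}$ in $H^\infty_\nu$ and use Montel to extract a subsequence $\{f_{n_k}\}$ converging uniformly on compact subsets of $\mathbb{D}$ to some $f \in H(\mathbb{D})$. A Fatou-style estimate, passing the pointwise limit through $\nu(z)|f_{n_k}(z)|$, shows that $f \in H^\infty_\nu$. Then the sequence $\{f_{n_k} - f\}$ is bounded in $H^\infty_\nu$ and converges to zero uniformly on compacta, so hypothesis (2) yields $\|T_g(f_{n_k}) - T_g(f)\|_{H^\infty_\mu} \to 0$. Thus every bounded sequence in $H^\infty_\nu$ admits a subsequence whose image under $T_g$ converges in $H^\infty_\mu$, which is exactly the definition of compactness for $T_g$.

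For the direction $(1) \Rightarrow (2)$, I would argue by contradiction. Suppose $T_g$ is compact and that $\{f_n\} \subset H^\infty_\nu$ is bounded with $f_n \to 0$ uniformly on compacta, yet $\|T_g(f_n)\|_{H^\infty_\mu} \geq \varepsilon > 0$ for some subsequence. By compactness, I pass to a further subsequence along which $T_g(f_n) \to h$ in the norm of $H^\infty_\mu$. I will then identify $h$ pointwise: since $T_g(f_n)(z) = \int_0^z f_n(\omega) g'(\omega)\,d\omega$ and $f_n \to 0$ uniformly on the compact segment from $0$ to $z$, we have $T_g(f_n)(z) \to 0$ for each $z \in \mathbb{D}$. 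Combined with pointwise convergence $T_g(f_n)(z) \to h(z)$ coming from norm convergence, this forces $h \equiv 0$, contradicting the lower bound $\|T_g(f_n)\|_{H^\infty_\mu} \geq \varepsilon$.

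There is no serious obstacle to this argument; it is really the \emph{standard} reasoning alluded to by the author. The only points requiring any care are bookkeeping items: confirming $f \in H^\infty_\nu$ via a lower semicontinuity argument, and verifying that norm convergence in $H^\infty_\mu$ implies pointwise convergence so that the limit $h$ can be identified. Both are immediate consequences of the weight estimate $|F(z)| \leq \|F\|_{H^\infty_\mu}/\mu(z)$.
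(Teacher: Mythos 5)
Your proof is correct and is precisely the ``standard argument'' the paper invokes without writing out: the paper gives no proof of Lemma~\ref{le5} at all, and your Montel/normal-family extraction for $(2)\Rightarrow(1)$ together with the pointwise identification of the norm limit via $|F(z)|\leq\|F\|_{H^{\infty}_\mu}/\mu(z)$ for $(1)\Rightarrow(2)$ is exactly how this equivalence is established in the literature the author alludes to. The only implicit assumption you rely on, strict positivity of the weights on compact subsets so that the pointwise estimates and Montel's theorem apply, is likewise implicit throughout the paper, so there is no gap.
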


\begin{theorem}\label{th7}
If $g\in H(\mathbb{D})$ such that $\log(g')\in \mathcal{B}$, the weight $\nu$ is analytic and $\mu$ is an arbitrary weight, then $T_g: H^{\infty}_\nu\rightarrow H^{\infty}_\mu$ is compact if and only if
$$\lim_{t_2\rightarrow1^-}\limsup_{t_1\rightarrow1^-}\sup_{0\leq\theta<2\pi}\mu(t_1)\int_{t_2}^{t_1}\frac{|g'(re^{i\theta})|}{\nu(r)}dr=0\,.$$
\end{theorem}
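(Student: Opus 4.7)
My plan is to invoke Lemma~\ref{le5} and adapt the test-function machinery of Theorem~\ref{th1} to this boundary-localised condition.

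\emph{Sufficiency.} Take a norm-bounded $\{f_n\}\subset H^\infty_\nu$ with $\|f_n\|_{H^\infty_\nu}\le M$ and $f_n\to 0$ uniformly on compacta, and let $\varepsilon>0$. The stated condition implies the boundedness condition of Theorem~\ref{th1}, so $T_g$ is bounded with $N:=\sup_{R,\theta}\mu(R)\int_0^R|g'(re^{i\theta})|/\nu(r)\,dr<\infty$. Choose $t_2<1$ satisfying $\limsup_{t_1\to 1^-}\sup_\theta\mu(t_1)\int_{t_2}^{t_1}|g'|/\nu\,dr<\varepsilon/(2M)$, and a corresponding $T_2\in(t_2,1)$ for which this inequality holds at every $R\ge T_2$. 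Splitting $\int_0^{Re^{i\theta}}f_ng'\,d\zeta$ at $t_2$, the $[0,t_2]$ piece (enlarged to $[0,T_2]$ when $R<T_2$) is dominated by $\mu(0)\cdot T_2\cdot\sup_{|z|\le T_2}|g'(z)|\cdot\sup_{|z|\le T_2}|f_n(z)|\to 0$ by local uniform convergence, while the $[t_2,R]$ piece is at most $M\cdot\varepsilon/(2M)=\varepsilon/2$ for $R\ge T_2$.

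\emph{Necessity.} Assume $T_g$ is compact but the stated limit is positive; extract $\varepsilon_0>0$, $t_{2,n}\uparrow 1$, angles $\theta_n$, and points $t_{1,n}\in(t_{2,n},1)$ with
$$\mu(t_{1,n})\int_{t_{2,n}}^{t_{1,n}}\frac{|g'(re^{i\theta_n})|}{\nu(r)}\,dr\ge\varepsilon_0.$$
Since the limsup is realised as $t_1\to 1^-$, I can further choose $t_{1,n}$ deep enough that $\mu(t_{1,n})/\mu(t_{2,n})\to 0$, which is possible along the limsup-realising sequence because $t_{1,n}\to 1^-$ while $\mu(t_{2,n})$ is fixed (for typical $\mu$ this is automatic). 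Construct $f_n$ exactly as in the proof of Theorem~\ref{th1} using $(t_{1,n},\theta_n)$, and modify to $\tilde f_n:=\phi_nf_n$ with concentrating factor
$$\phi_n(z):=\Bigl(\tfrac{1+e^{-i\theta_n}z}{2}\Bigr)^{k_n},$$
where $k_n\in\mathbb{N}$ is chosen so that $k_n\to\infty$ and $((1+t_{2,n})/2)^{k_n}\ge 1/2$ — feasible since the permissible upper bound $\log 2/\log(2/(1+t_{2,n}))$ diverges. Then $\|\tilde f_n\|_{H^\infty_\nu}\le e^{C_2(\gamma,\eta)}$ (as $|\phi_n|\le 1$ on $\mathbb D$), $\tilde f_n\to 0$ locally uniformly (as $|\phi_n(z)|\le((1+\rho)/2)^{k_n}\to 0$ on $\{|z|\le\rho<1\}$), and $\phi_n(re^{i\theta_n})=((1+r)/2)^{k_n}\in[1/2,1]$ is positive real on the active arc $r\in[t_{2,n},t_{1,n}]$.

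Feeding $\tilde f_n$ into the lower-bound argument of Theorem~\ref{th1} — the positive real factor $\phi_n(re^{i\theta_n})$ commutes with $\operatorname{Re}(\cdot)$, and on the active arc the $\cos(\pi/3)$ phase alignment and the $e^{-C_2(\gamma,\eta)}$ bound from the harmonic conjugate are preserved verbatim — produces
$$\|T_g\tilde f_n\|_{H^\infty_\mu}\ge\tfrac12\cos(\pi/3)e^{-C_2(\gamma,\eta)}\varepsilon_0-e^{C_2(\gamma,\eta)}N\,\frac{\mu(t_{1,n})}{\mu(t_{2,n})},$$
the error term arising from $|\tilde f_n g'|\le e^{C_2(\gamma,\eta)}|g'|/\nu$ on $[0,t_{2,n}]$ combined with $\mu(t_{2,n})\int_0^{t_{2,n}}|g'|/\nu\,dr\le N$ from boundedness of $T_g$. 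By the sparsity choice the error vanishes, so $\|T_g\tilde f_n\|_{H^\infty_\mu}\ge c\varepsilon_0/2>0$ for $n$ large, contradicting Lemma~\ref{le5}. The main obstacle is securing $\mu(t_{1,n})/\mu(t_{2,n})\to 0$ when $\mu$ is non-typical and so need not vanish at the boundary: in that case the simple multiplier $\phi_n$ must be refined into a genuinely boundary-localising factor, for example a pseudohyperbolic power centred at $t_{2,n}e^{i\theta_n}$ tuned so that $\phi_n$ itself damps the $[0,t_{2,n}]$ portion of the integral without disturbing the positive real lower bound on the active arc.
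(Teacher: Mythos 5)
Your sufficiency argument is correct and coincides with the paper's proof (split the integral at $t_2$, use local uniform convergence on the compact part, and control the tail by the hypothesis together with the norm bound), so that half needs no comment. The necessity direction, however, has a genuine gap: the step ``I can further choose $t_{1,n}$ deep enough that $\mu(t_{1,n})/\mu(t_{2,n})\to 0$'' is impossible for a general weight. The theorem allows an \emph{arbitrary} $\mu$, in particular $\mu\equiv 1$ (the target $H^{\infty}$, which is precisely the motivating case of the paper and of Corollary~\ref{cor1}); then $\mu(t_{1,n})/\mu(t_{2,n})\equiv 1$ and your error term $e^{C_2(\gamma,\eta)}N\,\mu(t_{1,n})/\mu(t_{2,n})$ is of size comparable to $N$, which swamps the lower bound $\tfrac12\cos(\pi/3)e^{-C_2(\gamma,\eta)}\varepsilon_0$. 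You acknowledge this obstacle, but the proposed repair --- a pseudohyperbolic power centred at $t_{2,n}e^{i\theta_n}$ --- is not carried out and is itself suspect: such a factor vanishes at the left endpoint of the active arc $[t_{2,n},t_{1,n}]$, so it damps exactly the part of the integral you need to keep bounded below, and you would still have to verify that it does not destroy the $\cos(\pi/3)$ phase alignment.

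The missing idea, which is how the paper's proof actually closes, is that no damping of $[0,t_{2,n}]$ and no $\mu$-ratio are needed at all. The phase alignment $\left|\arg g'(re^{i\theta_n})-u_n\bigl(\psi^{-1}_{\eta,\theta_n}(re^{i\theta_n})\bigr)\right|\le\pi/3$ holds on all of $[r_\eta,1)$ for the \emph{fixed} radius $r_\eta$ from Theorem~\ref{th1}, so the real part of the integrand is nonnegative on the whole interval $[r_\eta,t_{1,n}]$; by this positivity you may simply discard the troublesome middle piece $[r_\eta,t_{2,n}]$ and restrict the lower bound to the active arc, where the concentrating factor is bounded below (the paper takes $s_n=f_n z^{k_n}$ with $k_n$ the largest integer such that $t_{2,n}^{k_n}\ge\tfrac13$, so $r^{k_n}\ge\tfrac13$ there; the constant unimodular factor $e^{ik_n\theta_n}$ along the ray is harmless since a modulus is taken, and your $\phi_n$ with $((1+t_{2,n})/2)^{k_n}\ge\tfrac12$ would serve equally well). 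The only remaining error term is then over the fixed compact interval $[0,r_\eta]$, where the concentrating factor is at most $r_\eta^{k_n}\to 0$ (respectively $((1+r_\eta)/2)^{k_n}\to 0$), so the error dies with constants depending only on $\mu(0)$, $\nu(r_\eta)$ and $\sup_{|z|\le r_\eta}|g'(z)|$ --- with no typicality assumption on $\mu$. Reorganizing your estimate this way, your test functions $\tilde f_n=\phi_n f_n$ do yield the contradiction with Lemma~\ref{le5} for arbitrary $\mu$, and the proof is complete; as written, however, your argument only establishes necessity for weights with $\mu(t)\to 0$, which excludes the central case.
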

\begin{proof}
Assume first that
$T_g: H^{\infty}_\nu\rightarrow H^{\infty}_\mu$ is compact and so it is bounded.
If
$$c:=\lim_{t_2\rightarrow1^-}\limsup_{t_1\rightarrow1^-}\sup_{0\leq\theta<2\pi}\mu(t_1)\int_{t_2}^{t_1}\frac{|g'(re^{i\theta})|}{\nu(r)}dr>0\,,$$
then by Theorem~\ref{th1}, we observe that $c<+\infty$\,.

Let $\{t_{2,n}\}_{n=1}^\infty$ be a sequence so that $0<t_{2,n}<t_{2,n+1}<1$, $\lim_{n\rightarrow\infty}t_{2,n}=1$ and
\begin{equation}\begin{split}\nonumber
c&=\inf_{n\in\mathbb{N}}\limsup_{t_1\rightarrow1^-}\sup_{0\leq\theta<2\pi}\mu(t_1)\int_{t_{2,n}}^{t_1}\frac{|g'(re^{i\theta})|}{\nu(r)}dr\\
&=\lim_{n\rightarrow\infty}\limsup_{t_1\rightarrow1^-}\sup_{0\leq\theta<2\pi}\mu(t_1)\int_{t_{2,n}}^{t_1}\frac{|g'(re^{i\theta})|}{\nu(r)}dr\,.\\
\end{split}\end{equation}
Then for any $n\in \mathbb{N}$, we can choose $t_{1,n}$ with $t_{2,n}<t_{1,n}$ such that
\begin{equation}\begin{split}\nonumber
\sup_{0\leq\theta<2\pi}\mu(t_{1,n})\int_{t_{2,n}}^{t_{1,n}}\frac{|g'(re^{i\theta})|}{\nu(r)}dr>c-\frac{1}{n^2}\,.
\end{split}\end{equation}
Also, we can choose an angle $\theta_n$ with $0\leq \theta_n<2\pi$ such that
\begin{equation}\begin{split}\nonumber
(1-t^2_{1,n})^\beta\int_{t_{2,n}}^{t_{1,n}}\frac{|g'(re^{i\theta_n})|}{\nu(r)}dr>c-\frac{1}{n}\,.
\end{split}\end{equation}

Define the sequence $\{k_n\}_{n=1}^\infty$ by choosing $k_n$ as the largest positive integer such that $t_{2,n}^{k_n}\geq\frac{1}{3}$ for any $n\in \mathbb{N}$, then since $\lim_{n\rightarrow\infty}\frac{\log\frac{1}{3}}{\log t_{2,n}}=+\infty$, it follows that $\lim_{n\rightarrow\infty}k_n=+\infty$\,.

Now define the sequence $\{s_n(z)\}_{n=1}^\infty$ by
$$s_n(z):=f_n(z)z^{k_n},\quad z\in\mathbb{D}\,,$$
where $f_n(z)$ is defined in the proof of Theorem~\ref{th1}, that is,
$$f_n(z):={h_n\circ\psi^{-1}_{\eta,\theta_n}(z)}\cdot {f_\nu(e^{-2\theta_ni}z)}\,,$$
where $f_\nu$ is an analytic function such that $v(z)=1/f_\nu(|z|)$ for all $z\in\mathbb{D}$\,.
Thus,
$$\|s_n\|_{H^{\infty}_{\nu}}\leq e^{C_2(\gamma,\eta)},$$
and $s_n\rightarrow 0$ uniformly on any compact subset of $\mathbb{D}$\,.

Choosing $n$ large enough such that $t_{2,n}>r_\eta$, where $r_\eta$ is defined in the proof of Theorem~\ref{th1}, then
\begin{equation}\begin{split}\nonumber
\|T_g(s_n)\|_{H^{\infty}_{\mu}}&\geq\left|\mu(t_{1,n})\int_0^{t_{1,n}}f_n(re^{i\theta_n})g'(re^{i\theta_n})r^{k_n}dr\right|\\
&\geq\left|\text{Re}\left(\mu(t_{1,n})\int_0^{t_{1,n}}f_n(re^{i\theta_n})g'(re^{i\theta_n})r^{k_n}dr\right)\right|\\
&\geq\left|\text{Re}\left(\mu(t_{1,n})\int_{r_\eta}^{t_{1,n}}f_n(re^{i\theta_n})g'(re^{i\theta_n})r^{k_n}dr\right)\right|\\
&\phantom{\geq\ }-\left|\text{Re}\left(\mu(t_{1,n})\int_0^{r_\eta}f_n(re^{i\theta_n})g'(re^{i\theta_n})r^{k_n}dr\right)\right|
\end{split}\end{equation}

First, we have
\begin{equation}\begin{split}\nonumber
&\left|\text{Re}\left(\mu(t_{1,n})\int_0^{r_\eta}f_n(re^{i\theta_n})g'(re^{i\theta_n})r^{k_n}dr\right)\right|\\
&\leq\mu(0)\int_0^{r_\eta}|f_n(re^{i\theta_n})||g'(re^{i\theta_n})|r^{k_n}dr\\
&\leq\mu(0)\|f_n\|_{H^{\infty}_{\nu}}\int_0^{r_\eta}\frac{|g'(re^{i\theta_n})|}{\nu(r)}r^{k_n}dr\\
&\leq\frac{\mu(0)e^{C_2(\gamma,\eta)}C_3(\eta)r^{k_n}_{\eta}}{\nu(r_\eta)}\,,
\end{split}\end{equation}
where $C_3(\eta):=\sup_{|z|\leq r_\eta}\{|g'(z)|\}$\,.

then, to estimate the other integral, we have
\begin{equation}\begin{split}\nonumber
&\left|\text{Re}\left(\mu(t_{1,n})\int_{r_\eta}^{t_{1,n}}f_n(re^{i\theta_n})g'(re^{i\theta_n})r^{k_n}dr\right)\right|\\
&=\left|\mu(t_{1,n})\text{Re}\left(\int_{r_\eta}^{t_{1,n}}\frac{g'(re^{i\theta_n})}{\nu(r)}e^{-i\left(u_n(\psi^{-1}_{\eta,\theta_n}(re^{i\theta_n}))+i\widetilde{u}_n(\psi^{-1}_{\eta,\theta_n}(re^{i\theta_n}))\right)}r^{k_n}dr\right)\right|\\
&=\Bigg|\mu(t_{1,n})\int_{r_\eta}^{t_{1,n}}\frac{|g'(re^{i\theta_n})|}{\nu(r)}e^{\widetilde{u}_n\left(\psi^{-1}_{\eta,\theta_n}(re^{i\theta_n})\right)}\\
&\phantom{=\ }\cdot\text{Re}\left(e^{i\left(\arg(g'(re^{i\theta_n}))-u_n(\psi^{-1}_{\eta,\theta_n}(re^{i\theta_n}))\right)}\right)r^{k_n}dr\Bigg|\\
&\geq\cos(\frac{\pi}{3})e^{-C_2(\gamma,\eta)}\left|\mu(t_{1,n})\int_{r_\eta}^{t_{1,n}}\frac{|g'(re^{i\theta_n})|}{\nu(r)}r^{k_n}dr\right|\\
&\geq\cos(\frac{\pi}{3})r_{2,n}^{k_n}e^{-C_2(\gamma,\eta)}\left|\mu(t_{1,n})\int_{t_{2,n}}^{t_{1,n}}\frac{|g'(re^{i\theta_n})|}{\nu(r)}dr\right|\\
&\geq\frac{1}{3}\cos(\frac{\pi}{3})e^{-C_2(\gamma,\eta)}(c-\frac{1}{n})\,.
\end{split}\end{equation}
Therefore, it follows that
\begin{equation}\begin{split}\nonumber
&\limsup_{n\rightarrow\infty}\|T_g(s_n)\|_{H^{\infty}_{\mu}}\\
&\geq\lim_{n\rightarrow\infty}\left(\frac{1}{3}\cos(\frac{\pi}{3})e^{-C_2(\gamma,\eta)}(c-\frac{1}{n})-\frac{\mu(0)e^{C_2(\gamma,\eta)}C_3(\eta)r^{k_n}_{\eta}}{\nu(r_\eta)}\right)\\
&=\frac{1}{3}\cos(\frac{\pi}{3})e^{-C_2(\gamma,\eta)}c>0\,,
\end{split}\end{equation}
which, according to Lemma~\ref{le5}, is in contradiction with the assumption that $T_g: H^{\infty}_\nu\rightarrow H^{\infty}_\mu$ is compact.

Conversely, assume that
$$\lim_{t_2\rightarrow1^-}\limsup_{t_1\rightarrow1^-}\sup_{0\leq\theta<2\pi}\mu(t_1)\int_{t_2}^{t_1}\frac{|g'(re^{i\theta})|}{\nu(r)}dr=0\,.$$
To prove that $T_g: H^{\infty}_\nu\rightarrow H^{\infty}_\mu$ is compact, choose any sequence $\{f_n\}_{n=1}^\infty\subset H^{\infty}_{\nu}$ such that there exists a positive number $W$ such that $\sup_{n\in \mathbb{N}}\|f_n\|_{H^{\infty}_{\nu}}\leq W$ and $f_n\rightarrow 0$ locally uniformly in $\mathbb{D}$ as $n\rightarrow\infty$\,. Let $\epsilon>0$, then there exists $t_{2,\epsilon}$ with $0<t_{2,\epsilon}<1$ such that
$$\limsup_{t_1\rightarrow1^-}\sup_{0\leq\theta<2\pi}\mu(t_1)\int_{t_{2,\epsilon}}^{t_1}\frac{|g'(re^{i\theta})|}{\nu(r)}dr<\epsilon\,.$$
Moreover, there exists $t_{1,\epsilon}$ with $t_{2,\epsilon}<t_{1,\epsilon}<1$ such that
$$\sup_{0\leq\theta<2\pi}\mu(t_1)\int_{t_{2,\epsilon}}^{t_1}\frac{|g'(re^{i\theta})|}{\nu(r)}dr<2\epsilon,\quad \text{whenever } t_{1,\epsilon}<t_1<1\,.$$

Now choose $N_\epsilon>0$ such that
$$\sup_{|z|\leq t_{1,\epsilon}}|f_n(z)|<\frac{\epsilon}{M_{1,\epsilon}\cdot t_{1,\epsilon}},\quad \text{whenever } n>N_\epsilon\,,$$
where $M_{1,\epsilon}:=\sup_{0\leq|z|\leq t_{1,\epsilon}}\left\{|g'(z)|\right\}$\,.

Then for $n>N_\epsilon$, we have
\begin{equation}\begin{split}\nonumber
\|T_g(f_n)\|_{H^{\infty}_{\mu}}&=\sup_{0\leq\theta<2\pi}\sup_{0\leq R<1}\left\{\mu(R)\left|\int_0^{R}f_n(re^{i\theta})g'(re^{i\theta})e^{i\theta}dr\right|\right\}\\
&\leq\sup_{0\leq\theta<2\pi}\sup_{0\leq R<1}\left\{\mu(R)\int_0^{R}|f_n(re^{i\theta})||g'(re^{i\theta})|dr\right\}\\
&\leq\sup_{0\leq\theta<2\pi}\sup_{0\leq R\leq t_{1,\epsilon}}\left\{\mu(R)\int_0^{R}|f_n(re^{i\theta})||g'(re^{i\theta})|dr\right\}\\
&\phantom{\leq}+\sup_{0\leq\theta<2\pi}\sup_{t_{1,\epsilon}<R}\left\{\mu(R)\int_0^{t_{2,\epsilon}}|f_n(re^{i\theta})||g'(re^{i\theta})|dr\right\}\\
&\phantom{\leq}+\sup_{0\leq\theta<2\pi}\sup_{t_{1,\epsilon}<R}\left\{\mu(R)\int_{t_{2,\epsilon}}^R|f_n(re^{i\theta})||g'(re^{i\theta})|dr\right\}\\
&\leq \epsilon+\epsilon+2W\epsilon=2(1+W)\epsilon\,,
\end{split}\end{equation}
which implies that $\lim_{n\rightarrow\infty}\|T_g(f_n)\|_{H^{\infty}_{\mu}}=0$ and hence, by Lemma~\ref{le5}, $T_g: H^{\infty}_\nu\rightarrow H^{\infty}_\mu$ is compact.
\end{proof}

\begin{remark}\label{re5}
From the proof of Theorem~\ref{th7}, we also see that the condition $$\lim_{t_2\rightarrow1^-}\limsup_{t_1\rightarrow1^-}\sup_{0\leq\theta<2\pi}\mu(t_1)\int_{t_2}^{t_1}\frac{|g'(re^{i\theta})|}{\nu(r)}dr=0\,,$$
is sufficient for the compactness of $T_g: H^{\infty}_\alpha\rightarrow H^{\infty}_\beta$ without the requirement that $\log(g')\in \mathcal{B}$ and weight $\nu$ is analytic.
\end{remark}
\begin{remark}\label{re6}
It should be noticed that Basallote et. al. \cite{BCHMP} had given the following complete characterization of the compactness of $T_g: H^{\infty}_\nu\rightarrow H^{\infty}_\mu$ for the general symbol $g\in H(\mathbb{D})$ when $\mu$ is quasi-normal, namely, the following statements are equivalent:

(1) $T_g: H^{\infty}_\nu\rightarrow H^{\infty}_\mu$ is compact.

(2) $\lim_{|z|\rightarrow1^-}(1-|z|^2)|g'(z)|{\mu(z)}/{\widetilde{\nu}(z)}=0\,.$

If, in addition, $\nu$ is a typical weight, then both (1) and (2) are equivalent to

(3) $T_g: H^{0}_\nu\rightarrow H^{0}_\mu$ is compact.
\end{remark}

For the compactness of $S_g: H^{\infty}_\nu\rightarrow H^{\infty}_\mu$\,, by the similar proof in Theorem~\ref{th7} and the arguments in the proof of Proposition~\ref{pro1}, we first give the sufficient condition when the weight $\nu$ satisfies property (U):

\begin{proposition}\label{pro3}
If the weight $\nu$ satisfies property (U) while $\mu$ is an arbitrary weight, then $S_g: H^{\infty}_\nu\rightarrow H^{\infty}_\mu$ is compact if
$$\lim_{t_2\rightarrow1^-}\limsup_{t_1\rightarrow1^-}\sup_{0\leq\theta<2\pi}\mu(t_1)\int_{t_2}^{t_1}\frac{|g(re^{i\theta})|}{(1-r^2)\nu(r)}dr=0\,.$$
\end{proposition}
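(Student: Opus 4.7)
The plan is to mimic the sufficiency direction in the proof of Theorem~\ref{th7}, but with $T_g$ replaced by $S_g$ and with the standard derivative estimate coming from property (U) used to control $f'_n$.

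First I would note that the hypothesis already implies the boundedness of $S_g:H^\infty_\nu\to H^\infty_\mu$ via Proposition~\ref{pro1}, since the iterated limit being zero forces the quantity
$\sup_{0\le t<1}\sup_{0\le\theta<2\pi}\mu(t)\int_0^t |g(re^{i\theta})|/((1-r^2)\nu(r))\,dr$ to be finite. Then I would record the obvious analogue of Lemma~\ref{le5} for $S_g$: an operator $S_g:H^\infty_\nu\to H^\infty_\mu$ that is bounded is compact if and only if for every bounded sequence $\{f_n\}\subset H^\infty_\nu$ with $f_n\to 0$ locally uniformly on $\mathbb{D}$ one has $\|S_g(f_n)\|_{H^\infty_\mu}\to 0$. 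This follows from Montel's theorem by the standard argument.

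Next, fix such a sequence with $\sup_n\|f_n\|_{H^\infty_\nu}\le W$, and given $\varepsilon>0$ choose $t_{2,\varepsilon}\in(0,1)$ and then $t_{1,\varepsilon}\in(t_{2,\varepsilon},1)$ exactly as in the sufficiency part of Theorem~\ref{th7}, so that
\[
\sup_{0\le\theta<2\pi}\mu(t_1)\int_{t_{2,\varepsilon}}^{t_1}\frac{|g(re^{i\theta})|}{(1-r^2)\nu(r)}\,dr<2\varepsilon\qquad\text{for all }t_1\in(t_{1,\varepsilon},1).
\]
Since $f_n\to 0$ locally uniformly, so does $f'_n$, and I can choose $N_\varepsilon$ such that $\sup_{|z|\le t_{1,\varepsilon}}|f'_n(z)|<\varepsilon/(M_{1,\varepsilon}t_{1,\varepsilon})$ for $n>N_\varepsilon$, where $M_{1,\varepsilon}:=\sup_{|z|\le t_{1,\varepsilon}}|g(z)|$. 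For the outer part $|\zeta|>t_{2,\varepsilon}$, I replace the pointwise bound $|f_n(\zeta)|\le\|f_n\|_{H^\infty_\nu}/\nu(\zeta)$ that was used in Theorem~\ref{th7} by the Cauchy-type estimate made available by property (U): from \cite[Lemma~5]{WOLF} there is a constant $C_\nu>0$ such that
\[
|f'_n(\zeta)|\le\frac{C_\nu\|f_n\|_{H^\infty_\nu}}{\nu(\zeta)(1-|\zeta|^2)}\le\frac{C_\nu W}{\nu(\zeta)(1-|\zeta|^2)}.
\]

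With these ingredients in hand I would split $\|S_g(f_n)\|_{H^\infty_\mu}=\sup_{0\le\theta<2\pi}\sup_{0\le R<1}\mu(R)\big|\int_0^R f'_n(re^{i\theta})g(re^{i\theta})e^{i\theta}\,dr\big|$ in the same three pieces as in Theorem~\ref{th7}: (i) $0\le R\le t_{1,\varepsilon}$, controlled by the local uniform smallness of $f'_n$ together with the boundedness of $\mu$ and $|g|$ there; (ii) $R>t_{1,\varepsilon}$ integrated over $[0,t_{2,\varepsilon}]$, again handled by local uniform smallness of $f'_n$ and a crude bound on $\mu(R)|g|$; (iii) $R>t_{1,\varepsilon}$ integrated over $[t_{2,\varepsilon},R]$, where the derivative estimate gives $|f'_n(re^{i\theta})|\le C_\nu W/(\nu(r)(1-r^2))$, and the chosen $t_{2,\varepsilon}$ combined with the hypothesis bounds this piece by $2C_\nu W\varepsilon$. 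Summing the three contributions yields $\|S_g(f_n)\|_{H^\infty_\mu}\le C\varepsilon$ for $n>N_\varepsilon$, and the conclusion follows from the analogue of Lemma~\ref{le5}.

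The only non-routine point is step (iii): one has to make sure that the pointwise derivative estimate supplied by property (U) pairs correctly with the $1/(1-r^2)$ factor already appearing in the hypothesis, so that the tail integral is exactly the one assumed to tend to zero. Everything else is a straightforward translation of the sufficiency argument of Theorem~\ref{th7} from $T_g$ to $S_g$.
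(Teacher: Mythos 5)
Your proposal is correct and is essentially the proof the paper intends: the paper establishes Proposition~\ref{pro3} precisely by combining the sufficiency argument of Theorem~\ref{th7} (the same three-piece splitting of $\mu(R)\bigl|\int_0^R f_n'(re^{i\theta})g(re^{i\theta})e^{i\theta}\,dr\bigr|$) with the property-(U) derivative estimate $|f_n'(z)|\le C_\nu\|f_n\|_{H^{\infty}_\nu}/\bigl(\nu(z)(1-|z|^2)\bigr)$ from \cite[Lemma~5]{WOLF} already invoked in Proposition~\ref{pro1}, which is exactly how you handle the tail integral. Your preliminary observations (the hypothesis implies the finiteness needed for boundedness, and the $S_g$-analogue of Lemma~\ref{le5} holds by the standard Montel argument) are also sound.
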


By considering the sequence $\{g_n\}_{n=1}^\infty$ of functions defined by
$$g_n(z):=\int_0^z\frac{s_n(\zeta)}{(1-e^{-2\theta_ni}\zeta^2)}d\zeta\,,$$
where $s_n$ is defined in the proof of Theorem~\ref{th7}, and by the arguments in the proof of Theorem~\ref{th2}, we have

\begin{theorem}\label{th8}
If $g\in H(\mathbb{D})$ such that $\log(g)\in \mathcal{B}$, the weight $\nu$ is analytic and quasi-normal and $\mu$ is an arbitrary weight, then $S_g: H^{\infty}_\nu\rightarrow H^{\infty}_\mu$ is compact if and only if
$$\lim_{t_2\rightarrow1^-}\limsup_{t_1\rightarrow1^-}\sup_{0\leq\theta<2\pi}\mu(t_1)\int_{t_2}^{t_1}\frac{|g(re^{i\theta})|}{(1-r^2)\nu(r)}dr=0\,.$$
\end{theorem}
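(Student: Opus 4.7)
The plan is to mirror the structure of Theorem~\ref{th7}, but to feed the construction through the same integrated test functions used in Theorem~\ref{th2}. For sufficiency, I will repeat the three-piece split from the converse direction of Theorem~\ref{th7}: since $\nu$ is quasi-normal, it satisfies property (U), and hence Proposition~\ref{pro3} gives the bound $|f_n'(z)|\leq C_\nu\|f_n\|_{H^\infty_\nu}/(\nu(z)(1-|z|^2))$. Writing $\|S_g(f_n)\|_{H^\infty_\mu}$ as a supremum of $\mu(R)\int_0^R|f_n'(re^{i\theta})|\,|g(re^{i\theta})|\,dr$, one splits $[0,R]$ into $[0,t_{1,\epsilon}]$ (using local uniform smallness of $f_n'$), $[0,t_{2,\epsilon}]$ with outer range $R>t_{1,\epsilon}$ (using that $|g|/((1-r^2)\nu)$ is bounded on $[0,t_{2,\epsilon}]$ together with smallness of $f_n$ there), and $[t_{2,\epsilon},R]$ (using the hypothesis on the double limit). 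The sum is $O(\epsilon)$, and Lemma~\ref{le5} closes the sufficiency.

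For necessity, I will argue by contrapositive. Assume $S_g$ is compact and that the double limit equals some $c>0$. Exactly as in the first half of the proof of Theorem~\ref{th7}, extract monotone sequences $\{t_{2,n}\}\nearrow 1$, $\{t_{1,n}\}$ with $t_{2,n}<t_{1,n}<1$, and angles $\{\theta_n\}\subset[0,2\pi)$ such that
$$\mu(t_{1,n})\int_{t_{2,n}}^{t_{1,n}}\frac{|g(re^{i\theta_n})|}{(1-r^2)\nu(r)}\,dr>c-\frac{1}{n},$$
and let $k_n$ be the largest integer with $t_{2,n}^{k_n}\geq 1/3$, so that $k_n\to\infty$. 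Define $s_n(z):=f_n(z)z^{k_n}$, where $f_n$ is built from the harmonic-conjugate construction of Lemmas~\ref{le1}--\ref{le2}, but now applied to $F_n(z):=-i\log(g\circ\psi_{\eta,\theta_n}(z))$, as in the proof of Theorem~\ref{th2} (the hypothesis $\log g\in\mathcal{B}$ here playing the role that $\log g'\in\mathcal{B}$ played before). Then set
$$g_n(z):=\int_0^z\frac{s_n(\zeta)}{1-e^{-2\theta_n i}\zeta^2}\,d\zeta.$$
Since $|1-e^{-2\theta_n i}z^2|\geq 1-|z|^2$ on $\mathbb{D}$, one gets $(1-|z|^2)\nu(z)|g_n'(z)|\leq\|s_n\|_{H^\infty_\nu}\leq e^{C_2(\gamma,\eta)}$, so $\{g_n\}$ is bounded in $\mathcal{B}^\infty_{(1-|z|^2)\nu(z)}$; quasi-normality of $\nu$ identifies this space with $H^\infty_\nu$, so $\{g_n\}$ is bounded in $H^\infty_\nu$. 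Moreover $g_n\to 0$ locally uniformly on $\mathbb{D}$, because $s_n\to 0$ locally uniformly (owing to the $z^{k_n}$ factor) and the kernel is uniformly bounded on compacta of $\mathbb{D}$.

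The crux is the lower bound for $\|S_g(g_n)\|_{H^\infty_\mu}$. Restricting to the ray $z=re^{i\theta_n}$ gives $1-e^{-2\theta_n i}z^2=1-r^2$, so the integrand becomes $s_n(re^{i\theta_n})g(re^{i\theta_n})/(1-r^2)$; the factor $1/(1-r^2)$ is precisely what makes the weight in the theorem appear. From here the estimation goes through verbatim as in Theorem~\ref{th7}: one splits $\int_0^{t_{1,n}}$ into $\int_0^{r_\eta}$ (a uniformly bounded error term, further damped by $r_\eta^{k_n}\to 0$) and $\int_{r_\eta}^{t_{1,n}}$, on which the phase cancellation coming from the $F_n$ (with $g$ in place of $g'$) together with the lower bound on the truncated integral $\int_{t_{2,n}}^{t_{1,n}}$ and the inequality $r^{k_n}\geq t_{2,n}^{k_n}\geq 1/3$ produces $\|S_g(g_n)\|_{H^\infty_\mu}\geq\frac{1}{3}\cos(\pi/3)e^{-C_2(\gamma,\eta)}\,c+o(1)>0$. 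This contradicts Lemma~\ref{le5} and proves necessity.

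The main obstacle I expect is the simultaneous bookkeeping: the $z^{k_n}$ truncation, the Cauchy-type kernel $(1-e^{-2\theta_n i}\zeta^2)^{-1}$, and the harmonic-conjugate phase construction all have to cooperate on the same ray. In particular I need to check carefully that the kernel does not deteriorate the $\mathcal{B}^\infty_{(1-|z|^2)\nu(z)}$-norm bound on $g_n$ (this is where the inequality $|1-e^{-2\theta_n i}z^2|\geq 1-|z|^2$ is essential), and that the boundary correction $\int_0^{r_\eta}$ is absorbed precisely by the damping factor $r_\eta^{k_n}\to 0$, exactly as in Theorem~\ref{th7}; once these checks are in hand, the rest is the parallel transcription of the earlier two proofs.
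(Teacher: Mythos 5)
Your proof follows the paper's intended argument almost exactly: the paper's own ``proof'' of Theorem~\ref{th8} is nothing more than the instruction to run the test functions $g_n(z)=\int_0^z s_n(\zeta)\,(1-e^{-2\theta_n i}\zeta^2)^{-1}d\zeta$ (with $s_n$ from Theorem~\ref{th7}) through the scheme of Theorem~\ref{th2}, and your reconstruction supplies precisely the details left implicit there -- including the point, which the paper glosses over by referring back to ``$f_n$ as in Theorem~\ref{th1}'', that the harmonic-conjugate construction must now be applied to $F_n=-i\log(g\circ\psi_{\eta,\theta_n})$ using the hypothesis $\log g\in\mathcal{B}$ rather than $\log g'\in\mathcal{B}$; likewise your checks that $|1-e^{-2\theta_n i}z^2|\geq 1-|z|^2$ controls the $\mathcal{B}^\infty_{(1-|z|^2)\nu(z)}$-norm of $g_n$, that the kernel collapses to $1-r^2$ on the ray $z=re^{i\theta_n}$ so as to produce the weight $(1-r^2)\nu(r)$, and that the $\int_0^{r_\eta}$ error is damped by $r_\eta^{k_n}\to 0$ are exactly the verifications the paper's sketch presupposes. (Lemma~\ref{le5} is stated for $T_g$, but the same standard argument yields the $S_g$ version, as the paper tacitly assumes.)

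One justification needs repair. You assert that quasi-normality of $\nu$ implies property (U); by the paper's definitions this is unsupported and, in fact, backwards: Lusky's theorem as quoted in the introduction says that \emph{under} property (U), quasi-normality is equivalent to property (L), so quasi-normality by itself yields neither property. Fortunately you do not actually need (U) for the sufficiency half: the derivative estimate $|f'(z)|\leq C_\nu\|f\|_{H^{\infty}_\nu}/\bigl((1-|z|^2)\nu(z)\bigr)$ follows directly from quasi-normality, since $H^{\infty}_\nu=\mathcal{B}^{\infty}_{(1-|z|^2)\nu(z)}$ and the closed graph theorem makes this identification bounded, which is exactly that estimate; this is also the reading under which the sufficiency arguments of Theorem~\ref{th2} and Proposition~\ref{pro3} apply in the present setting. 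With that one-line substitution replacing the false implication, your sufficiency argument, and hence the whole proof, is sound and coincides with the paper's route.
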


Now for the compactness of the companion operators $S_g: H^{\infty}_\nu\rightarrow H^{\infty}_\mu$, we have the following characterizations for the general symbol $g\in H(\mathbb{D})$\,.
\begin{theorem}\label{th9}
If $g\in H(\mathbb{D})$ and $\nu$ is a weight, then

(1) $S_g: H^{\infty}_\nu\rightarrow H^{\infty}$ is compact if and only if $g=0$\,;

(2) If $\nu$ satisfies property (U) and $\mu$ is quasi-normal, then the following conditions are equivalent:

(2.1) $S_g: H^{\infty}_\nu\rightarrow H^{\infty}_\mu$ is compact.

(2.2) $\lim_{|z|\rightarrow1^-}\frac{\mu(z)}{{\nu}(z)}|g(z)|=0\,.$

If, in addition, $\nu$ is a typical weight, then both (2.1) and (2.2) are equivalent to

(2.3) $S_g: H^{0}_\nu\rightarrow H^{0}_\mu$ is compact.
\end{theorem}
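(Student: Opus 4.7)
The plan is to treat the two parts separately, using tools already established in the paper.

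For part~(1), one direction is trivial, so I assume $S_g\colon H^{\infty}_\nu\to H^{\infty}$ is compact and test against the sequence $f_n(z)=z^n$. Because $\nu$ is decreasing, $\|z^n\|_{H^{\infty}_\nu}\leq\nu(0)$, and $z^n\to 0$ locally uniformly on $\mathbb{D}$. The obvious $S_g$-analog of Lemma~\ref{le5} (same Montel/normal family argument) then forces $\|S_g(z^n)\|_{H^{\infty}}\to 0$. Writing $g(z)=\sum_{k\geq 0}b_kz^k$ and integrating termwise gives
\[
(S_g z^n)(z)=\int_0^z n\zeta^{n-1}g(\zeta)\,d\zeta=\sum_{k\geq 0}\frac{n\,b_k}{n+k}\,z^{n+k}.
\]
The universal Cauchy coefficient bound $|c_m|\leq\|h\|_{H^{\infty}}$ applied to $h=S_g(z^n)$ yields $\frac{n}{n+k}|b_k|\leq\|S_g(z^n)\|_{H^{\infty}}$ for every fixed $k$. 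Letting $n\to\infty$ forces $b_k=0$ for all $k$, hence $g\equiv 0$.

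For part~(2), the strategy mirrors the proof of Theorem~\ref{th3}: identify $S_g$ with the multiplication-differentiation operator $M_gD$. Quasi-normality of $\mu$ gives $H^{\infty}_\mu=\mathcal{B}^{\infty}_\omega$ with equivalent norms, where $\omega(z)=(1-|z|^2)\mu(z)$. Since $(S_gf)(0)=0$, the Bloch norm collapses to
\[
\|S_gf\|_{H^{\infty}_\mu}\sim |(S_gf)(0)|+\|(S_gf)'\|_{H^{\infty}_\omega}=\|f'g\|_{H^{\infty}_\omega}=\|M_gDf\|_{H^{\infty}_\omega},
\]
so $S_g\colon H^{\infty}_\nu\to H^{\infty}_\mu$ is compact if and only if $M_gD\colon H^{\infty}_\nu\to H^{\infty}_\omega$ is compact. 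Applying the compactness analog of \cite[Theorem~7]{MZ} with $\varphi(z)\equiv z$ converts this into
\[
\lim_{|z|\to 1^-}\frac{\omega(z)}{(1-|z|^2)\nu(z)}|g(z)|=\lim_{|z|\to 1^-}\frac{\mu(z)}{\nu(z)}|g(z)|=0,
\]
which is exactly (2.2), establishing (2.1)$\Leftrightarrow$(2.2). Under the added hypothesis that $\nu$ is typical (and noting that $\mu$ is automatically typical, being quasi-normal), the equivalence of (2.1) and (2.3) is obtained by combining Lemma~\ref{le3} with Schauder's theorem: once (2.2) holds, polynomial density delivers boundedness of $S_g\colon H^0_\nu\to H^0_\mu$ exactly as in the (2)$\Rightarrow$(3) step of Theorem~\ref{th3}, whence Lemma~\ref{le3} applies and $S_g^{**}=S_g$; since an operator is compact if and only if its biadjoint is compact, both directions follow.

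The main obstacle is whether the compactness version of \cite[Theorem~7]{MZ} is explicitly stated there. If not, one re-derives the little-o characterization of $M_gD$ directly by choosing the peak functions $f_a$ from property~(2) of the associated weight (essentiality of $\nu$, guaranteed by property (U), gives $\widetilde{\nu}\asymp\nu$) against a boundary sequence $a_n\to\partial\mathbb{D}$, and invoking the $S_g$-analog of Lemma~\ref{le5}. Part~(1) is much shorter; its only subtlety is recognizing that working in the unweighted target $H^{\infty}$ provides universal Cauchy coefficient bounds, which is precisely what forces every Taylor coefficient of $g$ to vanish.
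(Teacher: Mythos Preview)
Your proof is correct. Part~(2) follows the paper's line almost verbatim: the same reduction to $M_gD:H^\infty_\nu\to H^\infty_\omega$ via quasi-normality of $\mu$, then the Manhas--Zhao characterization (the compactness version is indeed stated as \cite[Theorem~8]{MZ}, so your worry about its availability is unfounded). For the equivalence with (2.3) you argue via Schauder's theorem and the biadjoint identity from Lemma~\ref{le3}, whereas the paper is slightly more direct: for (2.1)$\Rightarrow$(2.3) it notes that compactness implies boundedness, Theorem~\ref{th3} then gives $S_g:H^0_\nu\to H^0_\mu$ bounded, and the restriction of a compact operator whose range already lands in the closed subspace $H^0_\mu$ is compact. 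Both routes work.

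Part~(1), however, is handled differently. The paper disposes of it in one line: since $\nu$ is decreasing, $H^\infty\subset H^\infty_\nu$ continuously, so compactness of $S_g:H^\infty_\nu\to H^\infty$ forces compactness of $S_g:H^\infty\to H^\infty$, and the latter happens only for $g=0$ by \cite{AJS}. Your argument via the test sequence $f_n(z)=z^n$ and the Cauchy coefficient bound $|b_k|\cdot\frac{n}{n+k}\le\|S_g(z^n)\|_{H^\infty}\to 0$ is a clean, self-contained alternative that avoids the external citation and in effect re-proves the relevant piece of the Anderson--Jovovic--Smith result. The paper's route is shorter but black-boxes the key fact; yours exposes the mechanism.
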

\begin{proof}
(1) This follows from the fact that $H^\infty\subset H^{\infty}_\nu$ and that $S_g: H^{\infty}\rightarrow H^{\infty}$ is compact if and only if $g=0$ (see \cite{AJS}).

(2) Since $\mu$ is quasi-normal, $H^{\infty}_\mu=\mathcal{B}^{\infty}_{\omega}$\,. Thus the compactness of $S_g: H^{\infty}_\nu\rightarrow H^{\infty}_\mu$ is equivalent to the compactness of $M_gD: H^{\infty}_\nu\rightarrow H^{\infty}_{\omega}$\,. Now by choosing $\varphi(z)\equiv z$ in \cite[Theorem~8]{MZ}, the compactness of $M_gD: H^{\infty}_\nu\rightarrow H^{\infty}_{\omega}$ is equivalent to the following condition
$$\lim_{|z|\rightarrow1^-}\frac{\omega(z)}{(1-|z|^2){\nu}(z)}|g(z)|=0\,,$$
which is equivalent to condition (2.2), that is, (2.1) and (2.2) are equivalent.

Assume that $\nu$ is typical, then since $\mu$ is quasi-normal, $\mu$ is also typical. By Lemma~\ref{le3}, it follows easily that (2.3) implies (2.1).

Now, it remains to show that (2.1) implies (2.3). If $S_g: H^{\infty}_\nu\rightarrow H^{\infty}_\mu$ is compact, then $S_g: H^{\infty}_\nu\rightarrow H^{\infty}_\mu$ is bounded. Hence by Theorem~\ref{th3}, $S_g: H^{0}_\nu\rightarrow H^{0}_\mu$ is bounded. Since $H^{0}_\mu$ is a closed subspace of $H^\infty_\mu$, it holds that $S_g: H^{0}_\nu\rightarrow H^{0}_\mu$ is compact.
\end{proof}

Now we are to investigate the conditions for the compactness of Volterra type operators $T_g$ and $S_g$ between Bloch type spaces $\mathcal{B}^\infty_{\nu}$ and $\mathcal{B}^\infty_{\mu}$\,. Since the corresponding conditions for the compactness of Volterra type operators $T_g$ between Bloch type spaces $\mathcal{B}^\infty_{\nu}$ and $\mathcal{B}^\infty_{\mu}$ had been studied in \cite{ELPSW}, thus, we concentrate on the companion operator $S_g$\,. First, we prove the following lemma.
\begin{lemma}\label{le6}
Let $\nu$ and $\mu$ be weights. Then the product of the multiplication operator $M_g$ and the differentiation operator $D$, namely, $M_gD$ is compact from Bloch type space $\mathcal{B}^\infty_{\nu}$ into weighted Banach space $H^\infty_{\mu}$ if and only if
$$\lim_{|z|\rightarrow1^-}\frac{\mu(z)}{\widetilde{\nu}(z)}|g(z)|=0\,.$$
\end{lemma}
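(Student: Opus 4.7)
The plan is to mimic the proof of Lemma~\ref{le4}, reducing the compactness of $M_gD$ to that of $M_g$ between weighted Banach spaces and then invoking the little-oh analogue of the boundedness criterion from \cite{CHD,ZN}.

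First, I would exploit that the differentiation operator $D$ gives an onto isometry, modulo the one-dimensional kernel of constants that $M_gD$ does not see, between $\mathcal{B}^\infty_\nu$ and $H^\infty_\nu$: if $\{f_n\}$ is bounded in $\mathcal{B}^\infty_\nu$, then $\{f_n'\}$ is bounded in $H^\infty_\nu$ with the same bound, and conversely every bounded family in $H^\infty_\nu$ arises as $\{f_n'\}$ for some bounded $\{f_n\}\subset\mathcal{B}^\infty_\nu$ (take primitives vanishing at $0$). Moreover, local uniform convergence to zero on compact subsets of $\mathbb{D}$ for $\{f_n\}$ corresponds, via Cauchy estimates and integration, to the same property for $\{f_n'\}$. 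Thus Lemma~\ref{le5} and its obvious analogue for $M_g$ imply that $M_gD:\mathcal{B}^\infty_\nu\to H^\infty_\mu$ is compact if and only if $M_g:H^\infty_\nu\to H^\infty_\mu$ is compact.

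Second, I would invoke the known characterization of compactness of multiplication operators between weighted Banach spaces of analytic functions, which is the little-oh counterpart of the boundedness criterion from \cite{CHD,ZN}: $M_g:H^\infty_\nu\to H^\infty_\mu$ is compact if and only if
$$\lim_{|z|\to 1^-}\frac{\mu(z)}{\widetilde{\nu}(z)}|g(z)|=0.$$
Sufficiency follows by splitting any bounded sequence $\{h_n\}\subset H^\infty_\nu$ tending to zero locally uniformly into its values on $\{|z|\le r\}$ and $\{|z|>r\}$, and using both uniform convergence and the pointwise bound $|h_n(z)|\le\|h_n\|_{H^\infty_\nu}/\widetilde{\nu}(z)$ coming from the definition of the associated weight. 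Necessity is obtained by testing against the peak functions $f_a\in H^\infty_\nu$ with $\|f_a\|_{H^\infty_\nu}\le 1$ and $|f_a(a)|=1/\widetilde{\nu}(a)$, after a standard modification that makes the test sequence tend to zero on compact subsets of $\mathbb{D}$.

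The main obstacle I expect is the necessity direction of this auxiliary fact about $M_g$: producing test functions based on $f_a$ that both peak at $a$ with the correct rate $1/\widetilde{\nu}(a)$ and converge to zero locally uniformly as $|a|\to 1^-$. This is routine but requires the standard trick of multiplying $f_a$ by a suitable M\"obius-type factor vanishing at the origin. Once this is in place, the two reductions combine immediately to yield the claimed equivalence.
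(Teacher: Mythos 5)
Your proposal is correct and takes essentially the same route as the paper: the paper likewise reduces compactness of $M_gD:\mathcal{B}^\infty_{\nu}\rightarrow H^\infty_{\mu}$ to compactness of $M_g:H^\infty_{\nu}\rightarrow H^\infty_{\mu}$, using boundedness of $D:\mathcal{B}^\infty_{\nu}\rightarrow H^\infty_{\nu}$ and of the Volterra operator $T_z:H^\infty_{\nu}\rightarrow\mathcal{B}^\infty_{\nu}$ (your ``take primitives vanishing at $0$'' step in operator form), and then quotes the compactness criterion $\lim_{|z|\rightarrow1^-}\mu(z)|g(z)|/\widetilde{\nu}(z)=0$ for multiplication operators from \cite{CHD}. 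The only difference is cosmetic: where you sketch the proof of that auxiliary criterion via the peak functions $f_a$ and a localizing factor, the paper simply cites \cite{CHD}.
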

\begin{proof}
Since the differentiation operator $D$ is a bounded operator from Bloch type space $\mathcal{B}^\infty_{\nu}$ into weighted Banach space $H^\infty_{\nu}$ and the Volterra operator $T_z$ is a bounded operator from weighted Banach space $H^\infty_{\nu}$ into Bloch type space $\mathcal{B}^\infty_{\nu}$\,, it follows that $M_gD: \mathcal{B}^\infty_{\nu}\rightarrow H^\infty_{\mu}$ is compact if and only if $M_g: H^\infty_{\nu}\rightarrow H^\infty_{\mu}$ is compact, which, according to \cite{CHD}, is equivalent to the condition
$$\lim_{|z|\rightarrow1^-}\frac{\mu(z)}{\widetilde{\nu}(z)}|g(z)|=0\,.$$
\end{proof}

Then by Lemma~\ref{le6}, using the similar proof of Theorem~\ref{th9}, we obtain
\begin{theorem}\label{th10}
If $g\in H(\mathbb{D})$, $\nu$ and $\mu$ are weights, then the following conditions are equivalent:

(1) $S_g: \mathcal{B}^{\infty}_\nu\rightarrow \mathcal{B}^{\infty}_\mu$ is compact.

(2) $\lim_{|z|\rightarrow1^-}|g(z)|{\mu(z)}/{\widetilde{\nu}(z)}=0\,.$

If, in addition, $\nu$ and $\mu$ are typical weights, then both (1) and (2) are equivalent to

(3) $S_g: \mathcal{B}^{0}_\nu\rightarrow \mathcal{B}^{0}_\mu$ is compact.
\end{theorem}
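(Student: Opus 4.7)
The plan is to reduce the problem to a compactness question for the product operator $M_g D$, exactly as in the proof of Theorem~\ref{th9}(2), and then invoke Lemma~\ref{le6} together with a duality/restriction argument for the little-oh statement.

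First, observe that $(S_g f)(0)=0$ and $(S_g f)'(z)=f'(z)g(z)=(M_g D f)(z)$, whence
\[
\|S_g f\|_{\mathcal{B}^\infty_\mu}=|S_g f(0)|+\sup_{z\in\mathbb{D}}\mu(z)|(S_g f)'(z)|=\|M_g D f\|_{H^\infty_\mu}.
\]
Consequently, $S_g:\mathcal{B}^\infty_\nu\to\mathcal{B}^\infty_\mu$ is compact if and only if $M_g D:\mathcal{B}^\infty_\nu\to H^\infty_\mu$ is compact, and Lemma~\ref{le6} immediately identifies the latter condition with the limit in (2). This gives (1)$\Leftrightarrow$(2). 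The same identity shows analogously that $S_g:\mathcal{B}^0_\nu\to\mathcal{B}^0_\mu$ is compact precisely when $M_g D:\mathcal{B}^0_\nu\to H^0_\mu$ is compact.

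For the equivalence with (3) under the additional assumption that $\nu$ and $\mu$ are typical, I would mimic the argument used in Theorem~\ref{th9}(2). For (3)$\Rightarrow$(1), Schauder's theorem gives that if $S_g:\mathcal{B}^0_\nu\to\mathcal{B}^0_\mu$ is compact then so is $S_g^{**}:(\mathcal{B}^0_\nu)^{**}\to(\mathcal{B}^0_\mu)^{**}$; via the biduality $(\mathcal{B}^0_\nu)^{**}=\mathcal{B}^\infty_\nu$ (which holds for typical weights) together with the Bloch-type version of Lemma~\ref{le3}, this bidual operator coincides with $S_g:\mathcal{B}^\infty_\nu\to\mathcal{B}^\infty_\mu$, yielding (1). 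For (1)$\Rightarrow$(3), the compactness of $S_g:\mathcal{B}^\infty_\nu\to\mathcal{B}^\infty_\mu$ entails its boundedness, so by Theorem~\ref{th4} $S_g$ maps $\mathcal{B}^0_\nu$ boundedly into $\mathcal{B}^0_\mu$; since $\mathcal{B}^0_\mu$ is a closed subspace of $\mathcal{B}^\infty_\mu$, the restriction of the compact operator $S_g:\mathcal{B}^\infty_\nu\to\mathcal{B}^\infty_\mu$ to the closed subspace $\mathcal{B}^0_\nu$ remains compact as a map into $\mathcal{B}^0_\mu$.

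No step should present any real difficulty: the derivation is essentially line-for-line the proof of Theorem~\ref{th9}(2), with Lemma~\ref{le6} replacing \cite[Theorem~8]{MZ} and Theorem~\ref{th4} replacing Theorem~\ref{th3}. The only point worth verifying explicitly is the norm identification above, which is immediate from $S_g f(0)=0$ and the definition of the Bloch-type norm, and the invocation of Lemma~\ref{le3}'s Bloch-type analogue to secure the identity $S_g^{**}=S_g$ on the biduals.
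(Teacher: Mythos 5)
Your proposal is correct and follows essentially the same route as the paper, which proves Theorem~\ref{th10} precisely by combining Lemma~\ref{le6} with the argument of Theorem~\ref{th9} (the isometric identification $\|S_g f\|_{\mathcal{B}^\infty_\mu}=\|M_gDf\|_{H^\infty_\mu}$, the bidual identity $S_g^{**}=S_g$ from the Bloch-type part of Lemma~\ref{le3} for (3)$\Rightarrow$(1), and the restriction-to-a-closed-subspace argument via Theorem~\ref{th4} for (1)$\Rightarrow$(3)). Your explicit verification of the norm identity and the invocation of Schauder's theorem merely spell out details the paper leaves implicit.
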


\begin{theorem}\label{th11}
If $g\in H(\mathbb{D})$, $\nu$ satisfies property (U) and $\mu$ is a weight, then the following conditions are equivalent:

(1) $S_g: H^{\infty}_\nu\rightarrow \mathcal{B}^{\infty}_\mu$ is compact.

(2) $\lim_{|z|\rightarrow1^-}\frac{\mu(z)}{(1-|z|^2){\nu}(z)}|g(z)|=0\,.$

If, in addition, $\nu$ and $\mu$ are typical weights, then both (1) and (2) are equivalent to

(3) $S_g: H^{0}_\nu\rightarrow \mathcal{B}^{0}_\mu$ is compact.
\end{theorem}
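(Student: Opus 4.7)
The plan is to reduce Theorem~\ref{th11} to a compactness question about the multiplication-differentiation operator $M_g D$, invoke a known characterization, and then mirror the biduality argument of Theorem~\ref{th9}(2) for the equivalence with (3).

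The pivotal identity is that $(S_g f)(0) = 0$ and $(S_g f)'(z) = f'(z) g(z)$, whence
\[
\|S_g f\|_{\mathcal{B}^\infty_\mu} = \sup_{z \in \mathbb{D}} \mu(z)|f'(z)||g(z)| = \|M_g D f\|_{H^\infty_\mu}.
\]
This shows that $S_g: H^\infty_\nu \to \mathcal{B}^\infty_\mu$ is compact if and only if $M_g D: H^\infty_\nu \to H^\infty_\mu$ is compact; here property (U) on $\nu$ enters through the derivative estimate $|f'(z)| \leq C_\nu \|f\|_{H^\infty_\nu}/(\nu(z)(1-|z|^2))$ cited from \cite{WOLF} and needed to control $M_g D f$ pointwise. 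Applying \cite[Theorem~8]{MZ} with $\varphi(z) \equiv z$ then characterizes compactness of $M_g D: H^\infty_\nu \to H^\infty_\mu$ by condition (2), which establishes (1) $\Leftrightarrow$ (2).

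For the additional equivalence with (3) under typicality of $\nu$ and $\mu$, the plan is to mirror the proof of Theorem~\ref{th9}(2). For (1) $\Rightarrow$ (3): condition (2) implies that for every polynomial $p$,
\[
\mu(z)|p'(z)||g(z)| \leq \frac{\mu(z)|g(z)|}{(1-|z|^2)\nu(z)} \cdot \nu(z)(1-|z|^2)|p'(z)| \to 0 \quad \text{as } |z|\to 1^-,
\]
so $M_g D p \in H^0_\mu$; density of polynomials in $H^0_\nu$ together with closedness of $H^0_\mu$ in $H^\infty_\mu$ then yields $M_g D(H^0_\nu) \subset H^0_\mu$, hence $S_g(H^0_\nu) \subset \mathcal{B}^0_\mu$, and compactness of the restriction follows from compactness of the full operator. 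For (3) $\Rightarrow$ (1), a Bloch-type analog of Lemma~\ref{le3} identifies the biadjoint $S_g^{**}: H^\infty_\nu \to \mathcal{B}^\infty_\mu$ with $S_g$ itself, and since the biadjoint of a compact operator is compact, the conclusion follows.

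The main obstacle is the adaptation of Lemma~\ref{le3} to the Bloch-type codomain: one must verify that for $\delta_z \in (\mathcal{B}^0_\mu)^*$ the identity $\langle S_g^*(\delta_z), f\rangle = \int_0^z f'(\zeta) g(\zeta)\, d\zeta$ extends to all $f \in H^\infty_\nu$, by approximating $f$ via $f_r(z) = f(rz)$ in the weak$^*$ topology of $H^\infty_\nu$ and using locally uniform convergence of $f_r'$ to $f'$ on $\mathbb{D}$. Once this is in hand, the remainder of the argument is routine and parallels Theorem~\ref{th9}(2) step by step.
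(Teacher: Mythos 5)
Your proposal is correct and follows essentially the same route the paper intends for this theorem: the isometric reduction $\|S_g f\|_{\mathcal{B}^\infty_\mu}=\|M_gD f\|_{H^\infty_\mu}$ combined with \cite[Theorem~8]{MZ} at $\varphi(z)\equiv z$ for (1)$\Leftrightarrow$(2), and the biduality argument of Lemma~\ref{le3} (in its routine mixed $H^0_\nu\rightarrow\mathcal{B}^0_\mu$ variant, which you correctly flag and sketch) together with density of polynomials for the equivalence with (3), exactly mirroring Theorem~\ref{th9}(2). The only minor inaccuracy is your attribution of the role of property (U): in this reduction it is not the derivative estimate from \cite{WOLF} that is needed, but rather the fact that property (U) makes $\nu$ essential, so that the condition from \cite{MZ}, naturally stated with $\widetilde{\nu}$, can be written with $\nu$ as in (2) --- a cosmetic point that does not affect the validity of the argument.
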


\begin{theorem}\label{th12}
If $g\in H(\mathbb{D})$, $\nu$ is a weight and $\mu$ is a quasi-normal weight, then the following conditions are equivalent:

(1) $S_g: \mathcal{B}^{\infty}_\nu\rightarrow H^{\infty}_\mu$ is compact.

(2) $\lim_{|z|\rightarrow1^-}\frac{(1-|z|^2)\mu(z)}{\widetilde{\nu}(z)}|g(z)|=0\,.$

If, in addition, $\nu$ is a typical weight, then both (1) and (2) are equivalent to

(3) $S_g: \mathcal{B}^{0}_\nu\rightarrow H^{0}_\mu$ is compact.
\end{theorem}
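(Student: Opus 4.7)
The plan is to reduce the problem to the compactness of the auxiliary operator $M_g D$ studied in Lemma~\ref{le6}, by exploiting the quasi-normality of $\mu$. Since $\mu$ is quasi-normal, $H^{\infty}_\mu$ coincides with $\mathcal{B}^{\infty}_{\omega}$ with equivalent norms, where $\omega(z)=(1-|z|^2)\mu(z)$. Because $(S_g f)(0)=0$ and $(S_g f)'(z)=f'(z)g(z)$, the $\mathcal{B}^\infty_\omega$-norm of $S_g f$ equals $\|M_g D f\|_{H^\infty_\omega}$ and is equivalent to $\|S_g f\|_{H^\infty_\mu}$. Hence compactness of $S_g:\mathcal{B}^{\infty}_\nu\to H^{\infty}_\mu$ is equivalent to compactness of $M_g D:\mathcal{B}^{\infty}_\nu\to H^{\infty}_{\omega}$, which by Lemma~\ref{le6} is characterized by $\lim_{|z|\to 1^-}\omega(z)|g(z)|/\widetilde{\nu}(z)=0$, i.e., condition (2). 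This settles the equivalence (1)$\Leftrightarrow$(2).

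For the additional equivalence with (3), note that $\mu$ being quasi-normal is in particular typical, so with $\nu$ typical both duality identifications $(H^0_\mu)^{**}=H^\infty_\mu$ and $(\mathcal{B}^0_\nu)^{**}=\mathcal{B}^\infty_\nu$ are available. For (1)$\Rightarrow$(3), compactness implies boundedness of $S_g:\mathcal{B}^{\infty}_\nu\to H^{\infty}_\mu$, and Theorem~\ref{th6} gives that the restriction $S_g:\mathcal{B}^0_\nu\to H^0_\mu$ is bounded; since the unit ball of $\mathcal{B}^0_\nu$ sits inside the unit ball of $\mathcal{B}^{\infty}_\nu$, its image is relatively compact in $H^{\infty}_\mu$, and because this image lies in the closed subspace $H^0_\mu$ it is relatively compact there as well.

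For (3)$\Rightarrow$(1), the plan is to run the bidual argument of Lemma~\ref{le3} in the mixed setting $\mathcal{B}^0_\nu\to H^0_\mu$. Using that $f_r(z):=f(rz)$ converges to $f$ in the weak$^*$ topology of $\mathcal{B}^\infty_\nu=(\mathcal{B}^0_\nu)^{**}$ and that $f'_r\to f'$ locally uniformly on $\mathbb{D}$, one verifies $S_g^{**}=S_g$ verbatim; then Schauder's theorem on biduals upgrades compactness of $S_g:\mathcal{B}^0_\nu\to H^0_\mu$ to compactness of $S_g:\mathcal{B}^\infty_\nu\to H^\infty_\mu$.

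The only genuinely new ingredient is the mixed-setting version of Lemma~\ref{le3}, but its proof is a line-by-line adaptation: evaluation functionals $\delta_z$ remain dense in $(H^0_\mu)^*$, the images $S_g^*(\delta_z)$ live in $(\mathcal{B}^0_\nu)^*$, and the chain of identities culminating in $\langle S_g^{**}(f),\delta_z\rangle=\lim_{r\to 1^-}\int_0^z f'(r\zeta)g(\zeta)\,d\zeta=\int_0^z f'(\zeta)g(\zeta)\,d\zeta$ goes through unchanged. Thus the main (and only nontrivial) step is the initial reduction to Lemma~\ref{le6}; all remaining arguments are formal consequences of the duality framework already set up in Lemma~\ref{le3} and Theorem~\ref{th6}.
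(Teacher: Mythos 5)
Your proposal is correct and follows essentially the same route the paper intends: it reduces (1)$\Leftrightarrow$(2) to Lemma~\ref{le6} via the identification $H^\infty_\mu=\mathcal{B}^\infty_\omega$ from quasi-normality (as in the proof of Theorem~\ref{th9}), and handles (3) by the bidual/Schauder argument of Lemma~\ref{le3} together with the boundedness equivalence of Theorem~\ref{th6}. Your explicit verification that Lemma~\ref{le3} adapts verbatim to the mixed setting $\mathcal{B}^0_\nu\to H^0_\mu$ merely fills in a detail the paper leaves implicit.
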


However, when $\mu$ is not a quasi-normal weight, then by the similar proof in Theorem~\ref{th8}, we can give the following complete characterization for the compactness of $S_g: \mathcal{B}^{\infty}_\nu\rightarrow H^{\infty}_\mu$ when $\nu$ is an analytic weight and $\log(g)\in \mathcal{B}$.
\begin{proposition}\label{pro4}
If $g\in H(\mathbb{D})$ such that $\log(g)\in \mathcal{B}$, the weight $\nu$ is analytic and $\mu$ is an arbitrary weight, then $S_g: \mathcal{B}^{\infty}_\nu\rightarrow H^{\infty}_\mu$ is compact if and only if
$$\lim_{t_2\rightarrow1^-}\limsup_{t_1\rightarrow1^-}\sup_{0\leq\theta<2\pi}\mu(t_1)\int_{t_2}^{t_1}\frac{|g(re^{i\theta})|}{\nu(r)}dr=0\,.$$
\end{proposition}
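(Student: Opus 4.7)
The plan is to mimic the proof of Theorem~\ref{th8}, with the necessary adjustments stemming from the fact that the source space is now $\mathcal{B}^\infty_\nu$ rather than $H^\infty_\nu$. The simplifying feature is that for $f\in\mathcal{B}^\infty_\nu$ one has the direct pointwise bound $|f'(z)|\leq\|f\|_{\mathcal{B}}/\nu(|z|)$, so Wolf's derivative estimate is not required and the factor $(1-r^2)$ in the denominator of the integrand from Theorem~\ref{th8} disappears.

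For the sufficiency, assume the displayed limit condition holds and let $\{f_n\}\subset\mathcal{B}^\infty_\nu$ be bounded with $f_n\to 0$ locally uniformly on $\mathbb{D}$ (so $f_n'\to 0$ locally uniformly by Cauchy's formula). I would estimate $\mu(R)\bigl|\int_0^{Re^{i\theta}}f_n'(\zeta)g(\zeta)\,d\zeta\bigr|\leq\mu(R)\int_0^R|f_n'(re^{i\theta})||g(re^{i\theta})|\,dr$ and split the inner integral at two radii $t_{2,\epsilon}<t_{1,\epsilon}$ exactly as in the converse part of Theorem~\ref{th7}: the tail over $[t_{2,\epsilon},R]$ is controlled by the hypothesized limit together with $|f_n'|\leq\|f_n\|_{\mathcal{B}}/\nu$, while the pieces over $[0,t_{2,\epsilon}]$ are controlled by locally uniform vanishing of $f_n'$ on $\{|z|\leq t_{1,\epsilon}\}$ together with $\sup_{|z|\leq t_{1,\epsilon}}|g(z)|<\infty$. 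Combined with the obvious analogue of Lemma~\ref{le5} for the domain $\mathcal{B}^\infty_\nu$, this yields compactness.

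For the necessity, I would argue by contradiction following Theorem~\ref{th7}. Suppose $S_g$ is compact but the limit equals some $c>0$. Pick sequences $t_{2,n},t_{1,n}\to 1^-$, angles $\theta_n\in[0,2\pi)$ and integers $k_n\to\infty$ with $t_{2,n}^{k_n}\geq 1/3$ such that
$$\mu(t_{1,n})\int_{t_{2,n}}^{t_{1,n}}\frac{|g(re^{i\theta_n})|}{\nu(r)}\,dr>c-\frac{1}{n}.$$
Since $\log(g)\in\mathcal{B}$ gives $|g'(z)/g(z)|\leq K/(1-|z|^2)$, the function $F_n(w):=-i\log\bigl(g(\psi_{\eta,\theta_n}(w))\bigr)$ satisfies $|F_n'(w)|\leq KC_1(\gamma,\eta)/|w|$ on $\Omega^{1/2}_{\gamma,\theta_n}$ by Lemma~\ref{le2}, placing $F_n$ in $\mathcal{B}(\Omega^{1/2}_{\gamma,\theta_n})$. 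Apply Lemma~\ref{le1} to obtain $u_n,\widetilde{u}_n$, set $h_n:=e^{-i(u_n+i\widetilde{u}_n)}$, and define
$$s_n(z):=h_n\bigl(\psi^{-1}_{\eta,\theta_n}(z)\bigr)\,f_\nu(e^{-2\theta_n i}z)\,z^{k_n},\qquad f_n(z):=\int_0^z s_n(\zeta)\,d\zeta.$$
Because $f_n'=s_n$, one obtains $\|f_n\|_{\mathcal{B}}\leq e^{C_2(\gamma,\eta)}$ and $f_n\to 0$ locally uniformly (thanks to the $z^{k_n}$ factor). Since $S_g(f_n)(z)=\int_0^z s_n(\zeta)g(\zeta)\,d\zeta$, the real-part argument of Theorem~\ref{th7} transfers verbatim with $g$ playing the role of $g'$, producing a positive lower bound on $\|S_g(f_n)\|_{H^\infty_\mu}$ that contradicts $\|S_g(f_n)\|_{H^\infty_\mu}\to 0$ guaranteed by Lemma~\ref{le5}.

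The main technical step is verifying that $F_n$, built from $g$ rather than $g'$, still belongs to the Bloch-type class $\mathcal{B}(\Omega^{1/2}_{\gamma,\theta_n})$; this is precisely what the hypothesis $\log(g)\in\mathcal{B}$ supplies, in analogy with the use of $\log(g')\in\mathcal{B}$ in Theorem~\ref{th7}. Apart from that substitution, the passage from $f_n=\int_0^{\cdot}s_n$ that upgrades $H^\infty_\nu$-boundedness of $s_n$ to $\mathcal{B}^\infty_\nu$-boundedness of $f_n$, and the cosmetic disappearance of the $(1-r^2)$ factor, the proof is a direct transcription of the arguments in Theorems~\ref{th7} and~\ref{th8}.
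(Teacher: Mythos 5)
Your proposal is correct and follows essentially the same route the paper intends: the paper proves Proposition~\ref{pro4} by reference to Theorem~\ref{th8} (hence Theorems~\ref{th2} and~\ref{th7}), and your test functions $f_n(z)=\int_0^z s_n(\zeta)\,d\zeta$ are exactly the natural adaptation, since taking an antiderivative of $s_n$ (without the $(1-e^{-2\theta_n i}\zeta^2)^{-1}$ factor of Theorem~\ref{th8}, which is what removes both the $(1-r^2)$ in the integrand and the quasi-normality hypothesis) turns the $H^\infty_\nu$-bound on $s_n$ directly into a $\mathcal{B}^\infty_\nu$-bound, with $\log(g)\in\mathcal{B}$ supplying the membership $F_n\in\mathcal{B}(\Omega^{1/2}_{\gamma,\theta_n})$ just as $\log(g')\in\mathcal{B}$ did in Theorem~\ref{th7}. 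Your sufficiency argument via $|f'|\leq\|f\|_{\mathcal{B}}/\nu$ and the splitting of the integral likewise matches the converse direction of Theorem~\ref{th7}, so the proposal is a faithful reconstruction of the omitted proof.
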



\begin{thebibliography}{99}
\bibitem{AC} A. Aleman, J. Cima,
{\it An integral operator on $H^p$ and Hardy's inequality}, J. Anal. Math. 85 (2001), 157-176.

\bibitem{AS} A. Aleman, A. Siskakis,
{\it An integral operator on $H^p$}, Complex Variables Theory Appl. 28 (1995), no.2, 149-158.

\bibitem{AS1} A. Aleman, A. Siskakis,
{\it Integration operators on Bergman spaces}, Indiana Univ. Math. J. 46 (1997), no.2, 337-356.

\bibitem{AJS} A. Anderson, M. Jovovic, W. Smith,
{\it Some integral operators acting on $H^\infty$}, Integral Equations Operator Theory 80 (2014), no. 2, 275-291.

\bibitem{AA} A. Anderson,
{\it Some closed range integral operators on spaces of analytic functions}, Integral Equations Operator Theory 69 (2011), 87-99.

\bibitem{BCHMP} M. Basallote, M. Contreras, C. Hern\'{a}ndez-Mancera, M. Mart\'{\i}n, P. Pa\'{u}l,
{\it Volterra operators and semigroups in weighted Banach spaces of analytic functions}, Collect. Math. 65 (2014), no. 2, 233-249.

\bibitem{BBG} K. Bierstedt, J. Bonet, A. Galbis,
{\it Weighted spaces of holomorphic functions on balanced domains}, Michigan Math. J. 40 (1993), 271-297.

\bibitem{BBT} K. Bierstedt, J. Bonet, J. Taskinen,
{\it Associated weights and spaces of holomorphic functions}, Studia Math. 127 (1998), 70-79.

\bibitem{CP} O. Constantin, J. Pel\'{a}ez,
{\it Integral operators, embedding theorems and a Littlewood-Paley formula on weighted Fock spaces}, J. Geom. Anal.  26  (2016),  no. 2, 1109-1154.


\bibitem{CHD} M. Contreras, A. Hernandez-Diaz,
{\it Weighted composition operators in weighted Banach spaces of analytic functions}, J. Austral. Math. Soc. Ser. A 69 (2000), no. 1, 41-60.

\bibitem{CPPR} M. Contreras, J. Pel\'{a}ez, C. Pommerenke, J. R\"{a}tty\"{a},
{\it Integral operators mapping into the space of bounded analytic functions}, J. Funct. Anal. 271 (2016), no. 10, 2899-2943.

\bibitem{ELPSW} T. Eklund, M. Lindstr\"{o}m, M. Pirasteh, A. Sanatpour, N. Wikman,
{\it Generalized Volterra operators mapping between Banach spaces of analytic functions}, Monatsh. Math. (2018). https://doi.org/10.1007/s00605-018-1216-5.

\bibitem{GGP} P.  Galanopoulos, D. Girela, J. Pel\'{a}ez,
{\it Multipliers and integration operators on Dirichlet spaces},  Trans. Amer. Math. Soc. 363 (2011),  no. 4, 1855-1886.

\bibitem{GP} D. Girela, J. Pel\'{a}ez,
{\it Carleson measures, multipliers and integration operators for spaces of Dirichlet type}, J. Funct. Anal. 241 (2006), no. 1, 334-358.

\bibitem{HL} G. Hardy, J. Littlewood,
{\it Some properties of fractional integrals. II}, Math. Z. 34 (1932), no. 1, 403-439.

\bibitem{LMN} J. Laitila, S. Miihkinen, P. Nieminen,
{\it Essential norms and weak compactness of integration operators}, Arch. Math. 97 (2011), no. 1, 39-48.

\bibitem{LIN} Q. Lin, J. Liu, Y. Wu,
{\it Volterra type operators on $S^p(\mathbb{D})$ spaces}, J. Math. Anal. Appl. 461 (2018), 1100-1114.

\bibitem{LUSKY} W. Lusky,
{\it Growth conditions for harmonic and holomorphic functions}, Functional analysis (Trier, 1994), 281-291, de Gruyter, Berlin, 1996.

\bibitem{MZ} J. Manhas, R. Zhao,
{\it Products of weighted composition operators and differentiation operators between Banach spaces of analytic functions}, Acta Sci. Math. (Szeged) 80  (2014), 665-679.

\bibitem{TM} T. Mengestie,
{\it Product of Volterra type integral and composition operators on weighted Fock spaces}, J. Geom. Anal.  24  (2014),  no. 2, 740-755.

\bibitem{SM} S. Miihkinen,
{\it Strict singularity of a Volterra-type integral operator on $H^p$}, Proc. Amer. Math. Soc. 145 (2017), no. 1, 165-175.

\bibitem{AM} A. Montes-Rodr\'{\i}guez,
{\it Weighted composition operators on weighted Banach spaces of analytic functions}, J. London Math. Soc. 61 (2000), no. 2, 872-884.

\bibitem{PB} Ch. Pommerenke,
{\it Boundary behaviour of conformal maps}, Grundlehren der Mathematischen Wissenschaften 299, Springer-Verlag, Berlin (1992).

\bibitem{P} Ch. Pommerenke,
{\it Schlichte Funktionen und analytische Funktionen von beschr\"{a}nkter mittlerer Oszillation}, (German) Comment. Math. Helv. 52 (1977), no. 4, 591-602.

\bibitem{SW} A. Shields, D. Williams,
{\it Bounded projections and the growth of harmonic conjugates in the unit disc}, Michigan Math. J. 29 (1982), no. 1, 3-25.

\bibitem{SSV} W. Smith, D. Stolyarov, A. Volberg,
{\it Uniform approximation of Bloch functions and the boundedness of the integration operator on $H^\infty$}, Adv. Math. 314 (2017), 185-202.

\bibitem{WOLF} E. Wolf,
{\it Composition followed by differentiation between weighted Banach spaces of holomorphic functions}, Rev. R. Acad. Cienc. Exactas F\'{\i}s. Nat. Ser. A Math. RACSAM 105 (2011), no. 2, 315-322.

\bibitem{XJ} J. Xiao,
{\it The $Q_p$ Carleson measure problem}, Adv. Math. 217 (2008), no.5, 2075-2088.

\bibitem{ZN} N. Zorboska,
{\it Intrinsic operators from holomorphic function spaces to growth spaces}, Integral Equations Operator Theory  87  (2017),  no. 4, 581-600.

\end{thebibliography}
\end{document}